\newtheorem{theorem}{Theorem}[section]
\newtheorem{corollary}[theorem]{Corollary}
\newtheorem{lemma}[theorem]{Lemma}
\theoremstyle{definition}
\newtheorem{remark}[theorem]{Remark}
\newcommand{\expect}[2]{\mathbb{E}_{#1} \left\{ {#2} \right\}}
\newcommand{\prob}[2]{\mathbb{P}_{#1} \left[ {#2} \right]}
\newcommand{\AND}{\quad \text{and}\quad}
\newcommand{\R}{\mathbb{R}}
\newcommand{\mydet}[1]{\mathrm{det}\left[ {#1} \right]}
\newcommand{\mytr}[1]{\mathrm{Tr}\left[ {#1} \right]}
\newcommand{\kk}{k}
\newcommand{\dd}{d}
\renewcommand{\ss}{n_1}
\renewcommand{\tt}{n_2}
\newcommand{\trans}{\intercal}
\newcommand{\MM}{\mathcal{M}}
\newcommand{\SPM}{\mathcal{SPM}}
\newcommand{\nn}{n}
\newcommand{\mm}{m}
\newcommand{\applyto}[1]{\left\{ {#1} \right\}}
\newcommand{\arxiv}[1]{\href{https://arxiv.org/abs/{#1}}{arXiv: {#1}}}
\title{Finite free point processes}
\author{Adam W. Marcus
\thanks{Research supported by NSF CAREER Grant DMS-1552520.}
\\ \'Ecole Polytechnique F\'ed\'erale de Lausanne
}
\begin{document}

\maketitle

\begin{abstract}

\noindent We use techniques from finite free probability to analyze matrix 
processes 
related to eigenvalues, singular values, and generalized singular values of 
random matrices.
The models we use are quite basic and the analysis consists entirely of 
expected characteristic polynomials.
A number of our results match known results in random matrix theory, however 
our main result (regarding generalized singular values) seems to be more 
general than any of the standard random matrix processes 
(Hermite/Laguerre/Jacobi) in the field.
To test this, we perform a series of simulations of this new process that, on 
the one hand, confirms that this process can exhibit behavior not seen in the 
standard random matrix processes,
but on the other hand provides evidence that the true behavior is captured 
quite well by our techniques.
This, coupled with the fact that we are able to compute the same statistics 
for this new model that we are for the standard models, suggests that further 
investigation could be both interesting and fruitful.

\end{abstract}

\section{Introduction}

For integers $m, n$, let $\MM_{m, n}$ denote the collection of real $m \times 
n$ matrices and let $X_{m \times n} \in \MM_{m, n}$ have entries consisting of
independent, identically distributed symmetric\footnote{$\prob{}{Y} = 
\prob{}{-Y}$.} random variables with variance 
$1$ (and all other moments bounded).
We consider the following questions:
\begin{description}
\item[1. Eigenvalues:] If $A \in \MM_{n, n}$, how do the eigenvalues of 
\[
A + \sqrt{\theta} X_{n \times n}
\]
evolve for $\theta \geq 0$?

\item[2. Hermitian Eigenvalues:] If $B \in \MM_{n, n}$, how do the eigenvalues 
of 
\[
(B + \sqrt{\theta} X_{n \times n}) + (B + \sqrt{\theta} X_{n \times n})^\trans
\]
evolve for $\theta \geq 0$?
\item[3. Singular Values:]  
If $C \in \MM_{m, n}$, how do the (squares of the) singular values of 
\[
C + \sqrt{\theta} X_{m, n}
\]
evolve for $\theta \geq 0$?
\item[4. Generalized Singular Values:] If $D_1 \in \MM_{\ss, \kk}$ and $D_2 \in 
\MM_{\tt, \kk}$, how do the (squares of the) generalized singular values of the 
pair
\[
\left\{ D_1 + \sqrt{\theta} X_{\ss \times \kk}, D_2 + \sqrt{\theta} X_{\tt 
\times \kk} 
\right\}
\]
evolve for $\theta \geq 0$?
\end{description}

In this paper we investigate topics \textbf{2.}, \textbf{3.}, \textbf{4.}
using techniques from finite free probability\footnote{Topic \textbf{1.} has 
the added complication of having a decomposition whose matrices 
may or may not be unitary, which is something that 
must be addressed by alternative means (see 
\cite{benno} for developments in this direction).}.
Our aim is to show that these simple models and the information we can get 
about them connect deeply to the standard models in random matrix theory.

We will first show that \textbf{2.} is related to a problem that has 
already been written about by Tao \cite{tt}, so (in fact) the real focus in 
this paper will be on \textbf{3.} and \textbf{4.} and the majority of the work 
will lie in developing a tool that is robust enough to handle both cases at 
once (Section~\ref{sec:poly}).
Our main tool will be finite free probability --- or, in other words, expected 
characteristic polynomials.
Despite only containing a fraction of the information about a distribution, we 
will find that expected characteristic polynomials capture essentially all of 
the ``non-random'' elements of random matrix processes.
In particular, the point process derived from \textbf{2.} will reproduce the 
Hermite (or Gaussian) process (Section~\ref{sec:hermite}) and the point process 
derived from \textbf{3.} will reproduce the Laguerre (or Wishart) process 
(Section~\ref{sec:laguerre}).
%\footnote{Some of the 
%results concerning the Laguerre process appear twice --- 
%Section~\ref{sec:laguerre} is intended to highlight the similarities of the 
%Laguerre process to the Hermite process, whereas Section~\ref{sec:svdbm} is 
%intended to highlight the fact that the Laguerre process can be derived 
%directly from the Jacobi process.}.

While those familiar with random matrix processes might naturally expect the 
point process derived from \textbf{4.} to reproduce the Jacobi process, we find 
that is not the case (though the two are related --- see 
Section~\ref{sec:gsvdbm2}).
Most notably, the dynamics of \textbf{4.} depend very heavily on the 
initial matrix, a feature not present in the Hermite, Laguerre, and 
Jacobi processes.
As far as we can tell, the dynamics displayed by \textbf{4.} are (in this 
sense) more complex than any of the processes typically studied in random 
matrix theory, and so may suggest a fruitful direction of further research.

In order to get a better understanding of the point process derived from 
\textbf{4.}, we provide the results of a simulation that produces two vastly 
different behaviors from the same starting points and then compare this to the 
information provided by the expected characteristic polynomials.
This, along with some other experimental results, is presented in  
Section~\ref{sec:simulation}.
Finally, we end with some concluding comments and acknowledgments in 
Section~\ref{sec:conclusion}.

\subsection{Conventions}\label{sec:prelims}

As mentioned in the introduction, we will let $\MM_{m, n}$ denote the 
collection of $m \times n$ real matrices and will always use $X_{m \times n} 
\in 
\MM_{m, n}$ to denote a matrix with entries consisting of
independent, identically distributed symmetric random variables with variance 
$1$ (and all other moments bounded)\footnote{It is likely this condition can be 
weakened substantially, but that is not something we will be concerned with in 
this paper.}.
The choice of reals is arbitrary --- one could substitute the complex (and 
conjugate transpose for transpose) and get the same results (except for 
Section~\ref{sec:simulation}).

For an operator $L:\R[x_1, \dots, x_n] \to \R[x_1, \dots, x_n]$, we will write 
\[
L \applyto{p}
\]
to denote $L$ applied to the polynomial $p$.

We will use $\SPM_{\nn} \subseteq \MM_{\nn, \nn}$ to denote the set of signed 
permutation matrices of size $\nn$.
We will say that a random matrix $Y \in \MM_{m \times n}$ is {\em 
$\SPM$--bi-invariant} if 
\[
\prob{}{Y} 
= \prob{}{Q_{m} Y}
= \prob{}{Y Q_n} 
\] 
for $Q_m \in \SPM_m$ and all $Q_n \in \SPM_n$ and we will say that a random 
matrix $Z \in \MM_{n \times n}$ is {\em $\SPM$--conjugation-invariant} if 
\[
\prob{}{Y} =
\prob{}{Q_n Y Q_n^T}
\]
for all $Q_n \in \SPM_{n}$.
It should be clear that (by construction) $X_{m, n}$ is $\SPM$--bi-invariant 
for all $n, m$.
It is then easy to check that $X_{\nn, \nn} + X_{\nn, \nn}^\trans$ is 
$\SPM$-conjugation--symmetric.

Lastly, we note that the convention in random matrix theory is to consider the 
{\em squares} of the (generalized) singular values, as this can then be viewed 
as an ``eigenvalue'' problem and we will do the same (in order to compare 
results effectively).

\newcommand{\ddd}[1]{\,\mathrm{d}#1}

\subsection{Some Random Matrix Theory}\label{sec:RMT}

Here we list the relevant random matrix processes along with the known results 
concerning the evolution of their derived point processes.
In each case, the behavior of the point process differs depending on whether 
the Brownian motion involved is real valued ($\beta = 1$) or complex valued 
($\beta = 
2$) valued\footnote{Technically, quaternion valued ($\beta = 4$) is possible, 
but that presents complications we wish to avoid here.}.

\subsubsection{Hermite Process}
The real (complex) matrix Hermite process was introduced and studied by Dyson 
\cite{dyson}. 
It can be constructed as a Wiener process $(H_t)_{t \in 0, \infty}$ with $H_0$ 
a fixed symmetric (Hermitian) matrix and with each increment $H_t - H_s$ equal 
to $\sqrt{t-s}$ times a Gaussian orthogonal (unitary) matrix.
Assuming $H_0$ has distinct eigenvalues, one can show that the eigenvalues of 
$H_t$ form a point process $\lambda = \lambda(t)$ which is the unique strong 
solution to the stochastic differential equation (SDE)
\begin{equation}\label{eq:hermite_SDE}
\dd \lambda_i = \ddd{W_i} +
 \frac{\beta}{2} \left[ 
\sum_{j \neq i} \frac{1}{\lambda_i - \lambda_j}
\right] \ddd{t}
\end{equation}
where the $W_i$ are (1-dimensional) independent Brownian motions.

\subsubsection{Laguerre Process}

The real (complex) matrix Laguerre process --- also known as the Wishart 
process --- was introduced and studied in \cite{bru}.
It can be constructed as a Wiener process $(L_t)_{t \in 0, \infty}$ with $L_0$ 
a fixed $\mm \times \nn$ real (complex) matrix with $\mm \leq \nn$ and with 
each increment $L_t - 
L_s$ equal to an $\mm \times \nn$ matrix with entries independent real 
(complex) Gaussian random variables with variance $t - s$.  
%\adam{
%also
%
%https://arxiv.org/pdf/2004.14613.pdf for $\beta n -> c$
%
%}
Assuming $L_0$ has distinct singular values, the $\mm$ eigenvalues of 
$L_tL_t^\trans$ (so the squares of the singular values of 
$L_t$) 
then form a point process $\lambda = \lambda(t) \in \R^\mm$ which is the unique 
strong solution to the SDE
\begin{equation}\label{eq:laguerre_SDE}
\dd \lambda_i = 2 \sqrt{\lambda_i} \ddd{W_i} +
 \beta \left[ 
\nn
+ \sum_{j \neq i} \frac{\lambda_i + \lambda_j}{\lambda_i - \lambda_j}
\right] \ddd{t}
\end{equation}
where the $W_i$ are (1-dimensional) independent Brownian motions.

\subsubsection{Jacobi Process}

The real (complex) matrix Jacobi process was introduced and studied in 
\cite{doumerc}.
The method of construction given in \cite{doumerc} is to first construct a 
Brownian motion $Q_n$ on the group of $n \times n$ orthogonal (unitary) using a 
heat kernel as in \cite{biane}.
Then for fixed parameters $\mm, p, q \leq n$ with $q = \nn-p$, one forms the $m 
\times m$ 
matrix 
\[
J_m := P_m Q_n P_p^\trans P_p Q_n^\trans P_m^\trans 
\]
where $P_k$ denotes the $k \times n$ projection matrix
\[
P_k = \begin{bmatrix} I_k & 0_{k \times (n-k)} \end{bmatrix}.
\]
%It can be shown (see \cite{demni}) that $J_m = J_m(t)$ is the unique strong 
%solution to the matrix-valued SDE
%\[
%\ddd{X_t} = \sqrt{X_t} \ddd{N^*_t} \sqrt{I_m - X_t} + \sqrt{I_m - X_t} 
%\ddd{N_t} 
%\sqrt{X_t}
%+ \beta(p I_m - (p+q)X_t)\ddd{t}
%\]
%where $N_t$ is an $m \times m$ matrix of independent Brownian motions.
It was shown in \cite{doumerc} that when $J_\mm$ is real (complex) 
and $\max \{ p, q \} \geq m-1 + 2/\beta$ for $\beta =1$ ($\beta = 2$), the 
eigenvalue process $\lambda = \lambda(t)$ is the unique strong solution of 
the stochastic differential 
equation
\begin{equation}\label{eq:jacobi_SDE}
\dd \lambda_i = 2 \sqrt{\lambda_i (1-\lambda_i)} \ddd{W_i} +
 \beta \left[ p (1 - \lambda_i) -  
q\lambda_i
+ \sum_{j \neq i} \frac{\lambda_i(1-\lambda_j) + 
\lambda_j(1-\lambda_i)}{\lambda_i - \lambda_j}
\right] \ddd{t}
\end{equation}
where the $W_i$ are (1-dimensional) independent Brownian motions.

\begin{remark}
It should be noted that the construction of the Jacobi process is quite 
different from the previous two. 
We suspect that this is because the natural way to generalize the constructions 
of the Hermite and Laguerre processes to generalized singular values does not 
reduce to a simple SDE (this is discussed in Section~\ref{sec:gsvdbm}).
\end{remark}

%\[
%\ddd{X_t} = \sqrt{X_t} \ddd{N^*_t} \sqrt{I_m - X_t} + \sqrt{I_m - X_t} 
%\ddd{N_t} 
%\sqrt{X_t}
%+ \beta(p I_m - (p+q)X_t)\ddd{t}
%\]
%
%There it is defined as the radial part of a left corner of an $n \times n$ 
%orthogonal (unitary) Brownian motion.
%

\subsection{Some remarks}\label{sec:goals}

As we claimed earlier, the goal is to show (and attempt to exploit) a 
seemingly strange connection between the point processes discussed in the 
previous section and ones derived from expected characteristic polynomials. 
A common theme in random matrix theory is the fact that certain distributions 
depend on the number field over which the entries are drawn.
This dependence on the number field is typically denoted $\beta$ and the 
general goal is to express solutions as a function of $\beta$ in such a way 
that $\beta = 1, 2, 4$ corresponds to the real, complex, and quaternion number 
systems.
Once one has an object (say, for example, a distribution) expressed in terms 
of $\beta$ in this way, one can consider the collection of objects one would 
get by plugging in any $\beta \geq 0$ as a parameter.  
While these may not necessarily correspond to random matrix, understanding the 
dependence of the objects on the parameter $\beta$ (apart from being of 
interest on its own) can often give insights into the original random matrix 
problems corresponding to the specific cases $\beta = 1, 2, 4$.
The idea that random matrices represent three distinct points on a continuum 
of probability models was called the ``Threefold Way" by Dyson in his original 
paper.
The author shares a belief first put forth by Edelman\footnote{At least as far 
as the author knows.} that there is at least one other ``Way'' --- that is, a 
fourth distinct point on the continuum of probability models that can be 
modeled using random matrices.
This distinct ``point'' is the limit $\beta \to \infty$, with the 
corresponding model being the algebra of expected characteristic polynomials 
known as ``finite free probability". 

It is worth noting that a common interpretation of the $\beta$ parameter is as 
an inverse temperature (stemming from Dyson's original paper \cite{dyson} where 
he noted that a number of the distributions he derived would be similar to what 
one would get from an abstract ``log-gas'').
In that respect, one would expect that a model for $\beta \to \infty$ would 
consist of the ``non-random'' contributions to a given random process.
While we will not attempt to define ``non-random'' rigorously (in general), 
it is clear what this would mean in the specific context of the processes 
listed in Section~\ref{sec:RMT}.
Each of these processes is a {\em drift--diffusion process}, with the parts 
that integrate against $\ddd{t}$ being referred to as the ``drift'' and the 
parts that integrate against Brownian motions being referred to as the 
``diffusion.'' 
Hence in this respect, we should expect a $\beta \to \infty$ model to exhibit 
similar drift behavior to the $\beta = 1, 2$ models, but without any 
diffusion term.

\section{Hermitian eigenvalues }\label{sec:hermite}

We start by discussing the case of Hermitian eigenvalues.
Our main tool will be a theorem that first appeared (in spirit) in 
\cite{ff_main} but was proved in the generality of 
$\SPM$--conjugation-invariance in \cite{DUI}.

\begin{theorem}\label{thm:additive_conv}
Let $A, B \in \MM_{\nn, \nn}$ be independent random matrices with $B$ 
being $\SPM$--conjugation-invariant. 
Furthermore, let $P, Q$ be power series for which the operators $P(\partial)$ 
and $Q(\partial)$ satisfy 
\[
\expect{A}{\mydet{x I + A}} 
= P(\partial) \applyto{x^\nn} 
\AND
\expect{B}{\mydet{x I + B}} 
= Q(\partial) \applyto{x^\nn}. 
\]
Then
\[
\expect{A, B}{ \mydet{x I + A + B } } 
= (PQ)(\partial) \applyto{x^\nn} 
\]
where $PQ$ denotes the multiplication of the two power series.
\end{theorem}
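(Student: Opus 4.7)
The plan is to reduce the statement to a deterministic-matrix problem and then invoke a signed-permutation averaging identity from finite free probability. First I would use independence to factor
\[
\expect{A, B}{\mydet{x I + A + B}} = \expect{A}{\expect{B}{\mydet{x I + A + B}}}.
\]
Since $B$ is $\SPM$-conjugation-invariant, the distributions of $B$ and $Q B Q^\trans$ agree for every fixed $Q \in \SPM_\nn$, so introducing an independent uniformly random $Q$ lets me rewrite the inner expectation as $\expect{B}{\expect{Q}{\mydet{x I + A + Q B Q^\trans}}}$ without changing its value.

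The main obstacle is the following averaging identity, which is essentially the content of the main theorem of \cite{ff_main} with the $\SPM$-generalization carried out in \cite{DUI}: for any fixed $\nn \times \nn$ matrices $A, B$,
\[
\expect{Q \in \SPM_\nn}{\mydet{x I + A + Q B Q^\trans}} = p_A \boxplus_\nn p_B,
\]
where $p_A(x) := \mydet{x I + A}$, $p_B(x) := \mydet{x I + B}$, and $\boxplus_\nn$ denotes the finite free additive convolution. I would establish this by expanding the determinant multilinearly; the signed permutation group acts with enough symmetry on the resulting index sums that the average over $Q$ depends only on the spectra of $A$ and $B$, and the symmetric function one obtains matches the combinatorial definition of $\boxplus_\nn$.

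Granted this identity, what remains is a translation into differential-operator language. Writing $p_A = P_A(\partial) \applyto{x^\nn}$ and $p_B = Q_B(\partial) \applyto{x^\nn}$ for the unique polynomials of degree at most $\nn$ satisfying these relations, a direct coefficient comparison shows
\[
p_A \boxplus_\nn p_B = (P_A Q_B)(\partial) \applyto{x^\nn},
\]
since the falling-factorial weights produced by $\partial^k$ acting on $x^\nn$ are exactly those appearing in the definition of $\boxplus_\nn$. Finally I would take expectations over $A$ and $B$: by linearity the hypotheses of the theorem give $\expect{A}{P_A} = P$ and $\expect{B}{Q_B} = Q$, and since $P(\partial)$ and $Q(\partial)$ commute as polynomials in $\partial$, assembling these pieces yields $\expect{A, B}{\mydet{x I + A + B}} = (PQ)(\partial) \applyto{x^\nn}$, as claimed.
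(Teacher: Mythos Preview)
The paper does not supply its own proof of Theorem~\ref{thm:additive_conv}; it merely states the result and cites \cite{ff_main} and \cite{DUI}. Your outline is essentially the argument one finds in those references: reduce by independence and $\SPM$-conjugation-invariance to the deterministic averaging identity over $\SPM_\nn$, identify that average as the finite free additive convolution, and then translate $\boxplus_\nn$ into the differential-operator form. So there is no real divergence to report.

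Two small points worth tightening. First, the hard content is entirely in the averaging identity $\expect{Q \in \SPM_\nn}{\mydet{x I + A + Q B Q^\trans}} = p_A \boxplus_\nn p_B$, and your one-sentence sketch (``expand multilinearly; enough symmetry'') hides all of the work; in \cite{DUI} this takes some care, and note that for general (not necessarily normal) $A,B$ the invariant is the \emph{characteristic polynomial}, not the spectrum per se. Second, in the final step the reason $\expect{A,B}{(P_A Q_B)(\partial)}$ factors as $(PQ)(\partial)$ is the \emph{independence} of $A$ and $B$ together with linearity, not the commutativity of $P(\partial)$ and $Q(\partial)$; your phrasing there is slightly off even though the conclusion is correct.
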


As mentioned in Section~\ref{sec:prelims}, $X_{\nn, \nn} + X_{\nn, \nn}^\trans$ 
is $\SPM$-conjugation--invariant, and so Theorem~\ref{thm:additive_conv} gives 
us (in 
theory) a way to compute
\[
\expect{}{ \mydet{x I - (A + \sqrt{\theta} X_{\nn, \nn}) - (A + \sqrt{\theta} 
X_{\nn, 
\nn})^\trans }}
\]
for any $A \in \MM_{\nn, \nn}$ simply by knowing 
\[
\expect{A}{\mydet{x I - (A + A^\trans)}} 
\AND
\expect{}{\mydet{x I - \sqrt{\theta}(X_{\nn, \nn} + X_{\nn, \nn}^\trans})} 
\]
To do so, however, we will need to find a power series $Q$ which 
satisfies
\[
\expect{}{\mydet{x I - \theta(X_{\nn, \nn}) + X_{\nn, \nn}^\trans}) }
= Q(\partial) \applyto{x^\nn}
\]
This can be derived from a result of Edelman which showed that the expected 
characteristic polynomial of the Gaussian Unitary Ensemble is a Hermite 
polynomial \cite{edelman_thesis}.
We give an alternative derivation here:
\begin{lemma}\label{lem:hermite}
For all integers $\nn > 0$, we have
\[
\expect{}{\mydet{x I - \sqrt{\theta}(X_{\nn \times \nn} + X_{\nn \times 
\nn}^\trans) 
}} = 
e^{-\theta \partial^2} \applyto{ x^\nn }
\]
where we are writing $e^{-\theta \partial^2}$ as shorthand\footnote{This also 
has the benefit of revealing the underlying semigroup.} for the operator 
\[
e^{-\theta \partial^2} = \sum_{i=0}^\infty \frac{(-\theta)^i}{i!} \partial^{2i}.
\]
\end{lemma}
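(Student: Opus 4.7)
The plan is to compute $\expect{}{\mydet{xI - M}}$ directly, where $M := \sqrt{\theta}(X_{\nn \times \nn} + X_{\nn \times \nn}^\trans)$, via the principal minor expansion
\[
\mydet{xI - M} = \sum_{k=0}^{\nn} (-1)^k x^{\nn - k} \sum_{|S| = k} \mydet{M_S},
\]
where $M_S$ denotes the principal submatrix of $M$ indexed by $S \subseteq [\nn]$. Since the joint distribution of the entries of $M$ is invariant under simultaneous row/column permutations, $\expect{}{\mydet{M_S}}$ depends only on $|S|$, so taking expectations reduces the problem to computing $\expect{}{\mydet{M_{[k]}}}$ for each $k \leq \nn$, where $[k] = \{1, \ldots, k\}$.

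For this I would apply the Leibniz formula to $\mydet{M_{[k]}}$ together with a Wick-type combinatorial argument. The off-diagonal entries $M_{ij} = \sqrt{\theta}(X_{ij} + X_{ji})$ (for $i < j$) are independent symmetric random variables with second moment $2\theta$, while the diagonal entries $M_{ii} = 2\sqrt{\theta} X_{ii}$ are independent and symmetric. In the expansion $\mydet{M_{[k]}} = \sum_\sigma \mathrm{sgn}(\sigma) \prod_i M_{i\sigma(i)}$, symmetry of each entry annihilates every term in which some variable $M_{ab}$ appears to an odd power. Examining the cycle structure of $\sigma$: fixed points contribute $M_{ii}$ to the first power, and cycles of length $\geq 3$ contribute each off-diagonal variable exactly once; hence only fixed-point-free involutions of $[k]$ survive in expectation, which forces $k$ to be even.

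For $k = 2j$, the $(2j-1)!!$ surviving fixed-point-free involutions each have sign $(-1)^j$ and contribute $(2\theta)^j$ in expectation, yielding $\expect{}{\mydet{M_{[2j]}}} = (-1)^j (2\theta)^j (2j-1)!! = (-\theta)^j (2j)!/j!$. Substituting back, the coefficient of $x^{\nn - 2j}$ in $\expect{}{\mydet{xI - M}}$ becomes $\binom{\nn}{2j} \cdot (-\theta)^j (2j)!/j! = \frac{(-\theta)^j}{j!} \cdot \frac{\nn!}{(\nn - 2j)!}$, which is exactly the coefficient of $x^{\nn - 2j}$ in $e^{-\theta \partial^2}\{x^\nn\} = \sum_{j \geq 0} \frac{(-\theta)^j}{j!} \partial^{2j}\{x^\nn\}$. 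The sign convention in the determinant is immaterial, since $M$ and $-M$ have the same distribution. The only real obstacle is the cycle-structure bookkeeping in the second paragraph; everything else is routine and uses only the symmetry of the $X_{ij}$'s, not any Gaussianity.
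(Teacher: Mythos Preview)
Your argument is correct. The underlying combinatorics coincides with the paper's, but the packaging differs: the paper first zeroes out the diagonal (observing that $\det(xI+Y)$ is affine in each diagonal entry, so those entries drop out in expectation), then invokes the Godsil--Gutman theorem to identify the expected characteristic polynomial with the matching polynomial of $K_n$, and finally uses the classical fact that this matching polynomial is the Hermite polynomial $H_n(x)=e^{-\partial^2/2}\{x^n\}$, followed by a rescaling $t=\sqrt{2\theta}$. Your cycle-structure analysis is exactly the content of the Godsil--Gutman argument (only permutations all of whose cycles are $2$-cycles---i.e., perfect matchings---survive in expectation), and your enumeration $(2j-1)!!$ of fixed-point-free involutions on $2j$ points is precisely the count of perfect matchings of $K_{2j}$ that makes the matching polynomial of $K_n$ equal to $H_n$. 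In effect you have inlined the two cited results into a single self-contained computation; this buys independence from the literature at the cost of concealing the matching-polynomial interpretation, while the paper's route makes the graph-polynomial connection explicit but leans on those references.
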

\begin{proof}
Let $Y = \frac{X_{\nn \times \nn} + X_{\nn \times \nn}^\trans}{\sqrt{2}}$.
This is a symmetric matrix with random variable entries, each of which has 
expectation $0$ and variance $1$.
Furthermore, all of the entries of $Y$ are independent except for the imposed 
symmetry $Y_{i, j} = Y_{j, i}$.  
We now consider the expansion of the polynomial
\begin{equation}\label{eq_Y}
\expect{}{\mydet{x I + Y}}
\end{equation}
using the formula
\[
\expect{}{\mydet{A}} = \sum_{\sigma \in S_n} (-1)^{\mathrm{sgn}(\sigma)} 
\expect{}{A_{1, 
\sigma(a)} \dots A_{n, \sigma(n)}}.
\]
Of particular note is the fact that the diagonal entries of $Y$ do not 
contribute in any way to \eqref{eq_Y} --- the expansion is affine in terms of 
each diagonal entry and so when the expectation is taken, these terms will all 
become $0$.
Hence we can replace the diagonal entries of $Y$ with (deterministic) $0$ 
entries and not change \eqref{eq_Y} --- we call the remaining matrix $Z$.

The utility of this transformation is that $Z$ can now be viewed as the 
weighted adjacency matrix of the complete graph with independent random 
variables on the edges, each of which has mean $0$ and variance $1$.
It is now a somewhat well-known result of Godsil and Gutmann that the expected 
characteristic polynomial of such an adjacency matrix is the matching 
polynomial of the graph \cite{gg}. 
It is also well known (see \cite{godsil}) that the matching polynomial of the 
complete graph on $n$ vertices is the Hermite polynomial, which has the formula 
\cite{szego}
\[
H_{\nn}(x) = e^{-\frac{\partial^2}{2}} \applyto{x^\nn}.
\]
From this, it is easy to check that
\[
\expect{}{\mydet{x I - t Z}} = t^\nn H_{\nn}(x/t) = 
e^{-t^2\frac{\partial^2}{2}} \applyto{x^\nn}.
\]
The theorem follows by setting $t = \sqrt{2\theta}$.
\end{proof}

A combination of Theorem~\ref{thm:additive_conv} and Lemma~\ref{lem:hermite} 
imply 
that for a given matrix $B \in \MM_{\nn, \nn}$ with 
\[
p_B(x) = \mydet{x I - B - B^\trans}
\]
we have the formula
\begin{equation}\label{eq:hermite}
\expect{}{\mydet{x I - (B + X_{\nn, \nn}) -  (B + X_{\nn, \nn})^\trans}}
= 
e^{-\theta \partial^2} \applyto{ p_B(x) }.
\end{equation}
At this point we could more or less refer to the reader to \cite{tt}, where an 
exploration of this process was conducted for other reasons (and contains a 
number of interesting observations that are beyond the scope of this paper).
However it will be instructive to outline relevant parts of the analysis 
briefly since we take a slightly different approach which will be somewhat 
instructive for the results in Section~\ref{sec:bm}.
To analyze \eqref{eq:hermite}, we start by finding a differential 
equation that it satisfies.
Let $p(x)$ be an arbitrary polynomial and let
\[
q(x, t) = e^{-\theta \partial^2} \applyto{ p(x) }.
\]
Now define the root path $r(\theta)$ so that $q(r(\theta), \theta) = 0$.
Taking a derivative in $\theta$, we have
\[
0 = \frac{\partial}{\partial \theta} q(r(\theta), \theta) 
= q_1(r(\theta), \theta) r'(\theta) + q_2(r(\theta), \theta)
\]
so (setting $\theta = 0$)
\[
0 = p'(r) r' - p''(r) 
\]
or
\[
r'(0) = \frac{p''(r)}{p'(r)}.
\]
Now if we set 
\[
p(x) = \prod_{i = 0}^\dd (x - \lambda_i)
\]
then at $x = \lambda_i$ we have
\begin{equation}\label{eq:deriv_trick}
\frac{p''(\lambda_i)}{p'(
\lambda_i)}
= \sum_{j \neq i} \frac{2}{\lambda_i - \lambda_j}.
\end{equation}
Hence evolution of the roots of \eqref{eq:hermite} satisfy the differential 
equations
\[
\frac{\partial \lambda_i}{\partial \theta}
= 
 \sum_{j\neq i} \frac{2}{\lambda_i - \lambda_j}.
\]
At this point in his presentation, Tao \cite{tt} notes ``Curiously, this system 
resembles that of 
Dyson Brownian motion (except with the Brownian motion part removed, and time 
reversed\footnote{His 
comment about time reversal stems from the fact that he was using the operator 
$e^{t \partial^2}$ instead of $e^{-t \partial^2}$.}).''
Specifically, he is noting that this evolution matches (up to a constant) the 
drift term in the Hermite process \eqref{eq:hermite_SDE}.

The ``curiosity'' comes from the fact that one would not expect this to be the 
case --- in some sense, this correspondence can be interpreted as asserting 
that a random variable $Y$ (in our case, the characteristic polynomial of a 
certain random matrix) satisfies
\[
f(\expect{}{Y}) = \expect{}{f(Y)}
\]
for some nonlinear function $f$ (in our case, the ``$k^{th}$ root'' function 
applied to a polynomial).
This is of course not true in general, but (as we shall see in 
Section~\ref{sec:laguerre}) seems to occur more generally in this context.

\section{Singular Values}\label{sec:laguerre}

For the sake of observing the parallels with the previous section, we will 
quickly discuss the case of singular values.
Our main tool will be a theorem that first appeared (in spirit) in 
\cite{aurelien_ff} but was proved in the generality of $\SPM$--bi-invariance in 
\cite{lg}.

\begin{theorem}\label{thm:rect_conv}
Let $A$ and $B$ be $\mm \times \nn$ independent random matrices with $B$ being 
$\SPM$--bi--invariant.
Furthermore, let $P, Q$ be power series for 
which the operators $P(\partial_x \partial_y)$ and $Q(\partial_x \partial_y)$ 
satisfy
\[
y^{\nn-\mm} \mydet{x y I - AA^T}
= P(\partial_x \partial_y) \applyto{ x^\mm y^\nn }
\AND
y^{\nn-\mm} \mydet{x y I - BB^T}
= Q(\partial_x \partial_y) \applyto{ x^\mm y^\nn }.
\]
Then 
\begin{equation}\label{eq:rect_conv}
\expect{Q, R}{ y^{\nn-\mm} \mydet{x y I - (A + Q B R)(A + Q B R)^T}}
= 
(PQ)(\partial_x \partial_y) \applyto{ x^\mm y^\nn }
\end{equation}
where $PQ$ denotes the multiplication of the two power series.
\end{theorem}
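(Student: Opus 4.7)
The plan is to mirror the proof of Theorem~\ref{thm:additive_conv}, replacing $\SPM$--conjugation-invariance with $\SPM$--bi-invariance and the scalar derivative with the rectangular derivative $\partial_x \partial_y$. First I would note that since $B$ is $\SPM$--bi-invariant, inserting independent uniformly-random signed permutation matrices $Q \in \SPM_\mm$ and $R \in \SPM_\nn$ does not alter the joint distribution of $(A, B)$, so for any observable $f$,
\[
\expect{A, B}{f(A, B)} = \expect{A, B, Q, R}{f(A, QBR)}.
\]
Applying this to the characteristic polynomial of $(A+B)(A+B)^\trans$ reduces the theorem to the following deterministic identity: for any fixed $C, D \in \MM_{\mm, \nn}$,
\[
\expect{Q, R}{y^{\nn-\mm} \mydet{x y I - (C+QDR)(C+QDR)^\trans}} = P_C(\partial_x \partial_y) \applyto{y^{\nn-\mm}\mydet{x y I - DD^\trans}},
\]
where $P_C$ is the power series for which $y^{\nn-\mm}\mydet{x y I - CC^\trans} = P_C(\partial_x \partial_y) \applyto{x^\mm y^\nn}$. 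Averaging this identity over $A, B$ and using that operators of the form $P(\partial_x \partial_y)$ all commute (they are power series in the single operator $\partial_x \partial_y$) then gives the stated result, with the product $PQ$ arising from operator composition.

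Second, I would establish the deterministic identity via a Cauchy--Binet expansion. Expanding $\mydet{x y I - (C+QDR)(C+QDR)^\trans}$ produces a signed sum indexed by subsets of $[\mm]$ and $[\nn]$, each term being a product of minors drawn from $C$ and from $QDR$. Averaging $Q, R$ uniformly over $\SPM_\mm \times \SPM_\nn$ then annihilates any term in which an entry of $D$ appears with odd total multiplicity (by the sign-flip component of $\SPM$) and forces the surviving $D$-minors to be supported on row/column indices disjoint from those of the accompanying $C$-minors (by the permutation component). After tracking the $y^{\nn-\mm}$ prefactor, what remains factors into a contribution from $CC^\trans$ and an independent one from $DD^\trans$ indexed by disjoint sets --- which is exactly the combinatorial pattern enacted by $P_C(\partial_x \partial_y)$ acting on a rectangular characteristic polynomial. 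This is the rectangular counterpart of the symmetrization identity appearing in \cite{aurelien_ff, lg}.

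The main obstacle is precisely this bookkeeping. Averaging over $\SPM_\mm \times \SPM_\nn$ is strictly stronger than simple sign symmetrization but weaker than averaging over the full orthogonal group, so one has to check with care that only the disjoint-index-set contributions survive and that they reassemble cleanly into the action of $P_C(\partial_x \partial_y)$ with the correct $y^{\nn-\mm}$ weighting encoding the rectangular aspect ratio $\nn/\mm$. Once this identity is in place, the theorem follows immediately from taking expectations on both sides and invoking the commutativity of the operators $P(\partial_x \partial_y)$ and $Q(\partial_x \partial_y)$.
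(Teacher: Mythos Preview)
The paper does not prove Theorem~\ref{thm:rect_conv}; it is stated and immediately attributed to \cite{lg} (with a precursor in \cite{aurelien_ff}), so there is no in-paper argument to compare against. Your outline is consistent with the strategy of those references: use $\SPM$--bi-invariance of $B$ to insert uniform $Q \in \SPM_\mm$, $R \in \SPM_\nn$ at no cost, reduce to a deterministic bilinear identity for fixed $C, D$, and then verify that identity by expanding the determinant and averaging.

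One point of caution on your combinatorial sketch. The claim that the permutation component of $\SPM$ ``forces the surviving $D$-minors to be supported on row/column indices disjoint from those of the accompanying $C$-minors'' is not quite the right mechanism. The sign-flip component is what annihilates the genuine cross-terms (those in which a single minor mixes entries of $C$ and of $QDR$); what the permutation average then does is symmetrize the surviving pure-$D$ contributions so that they depend only on the \emph{sizes} of the relevant index sets rather than on the sets themselves. It is this size-only dependence that lets the answer be rewritten as a power series in the single operator $\partial_x\partial_y$ applied to $x^\mm y^\nn$, with the $y^{\nn-\mm}$ prefactor absorbing the rectangular aspect ratio. The disjointness you mention is already built into the Cauchy--Binet partition of rows and columns, not something the permutation average enforces. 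This is precisely the bookkeeping you flag as the main obstacle, and it is where the real work in \cite{lg} lies; your high-level plan is sound, but the paragraph as written would not yet constitute a proof.
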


As mentioned in Section~\ref{sec:prelims}, $X_{\nn, \mm}$ 
is $\SPM$-bi--invariant, and so Theorem~\ref{thm:rect_conv} gives us (in 
theory) a way to compute
\[
\expect{}{ \mydet{x I - (A + \sqrt{\theta} X_{\mm, \nn})(A + \sqrt{\theta} 
X_{\mm, 
\nn})^\trans }}
\]
for any $A \in \MM_{\mm, \nn}$ simply by knowing 
\[
\expect{}{ \mydet{x I - AA^\trans}}
\AND
\expect{}{ \mydet{x I - \theta X_{\mm, \nn}X_{\mm, \nn}^\trans}}.
\]
This can again be derived from a result of Edelman which showed that the 
expected 
characteristic polynomial of the Wishart Ensemble is a Laguerre 
polynomial \cite{edelman_thesis}.
In Section~\ref{sec:cp} we will give an alternative derivation of the 
following fact (see Lemma~\ref{lem:myCLT}):
\begin{lemma}\label{lem:laguerre}
For all integers $\mm, \nn > 0$, we have 
\[
\expect{}{ y^{\nn-\mm} \mydet{x y I - \theta X_{\mm, \nn}X_{\mm, \nn}^\trans}}
=
e^{-\theta \partial_x \partial_y} \applyto{ x^\mm y^\nn }
\]
where we are writing $e^{-\theta \partial_x \partial_y}$ as shorthand for the 
operator 
\[
e^{-\theta \partial_x \partial_y} = \sum_{i=0}^\infty \frac{(-\theta)^i}{i!} 
\partial_x^{i} \partial_y^i.
\]

\end{lemma}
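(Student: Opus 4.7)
The plan is to mirror the proof of Lemma~\ref{lem:hermite} in the rectangular (bipartite) setting. Let $X = X_{\mm, \nn}$. Expanding
\[
\mydet{xy I - \theta X X^\trans}
= \sum_{\sigma \in S_\mm} (-1)^{\mathrm{sgn}(\sigma)} \prod_{i=1}^\mm (xy I - \theta X X^\trans)_{i, \sigma(i)}
\]
and substituting $(X X^\trans)_{ij} = \sum_{k} X_{ik} X_{jk}$ produces a polynomial in the entries $X_{ij}$. Because the $X_{ij}$ are independent, symmetric, and have unit variance, only those monomials in which each $X_{ij}$ appears an even number of times survive taking expectation, and the simplest such contributions --- where each $X_{ij}$ appears either zero times or exactly twice --- are in bijection with matchings in the complete bipartite graph $K_{\mm, \nn}$.

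Next I would argue, via the rectangular analogue of the Godsil-Gutmann theorem used in Lemma~\ref{lem:hermite}, that in fact only these pairing contributions survive: higher parity patterns cancel against one another through the alternating signs $(-1)^{\mathrm{sgn}(\sigma)}$. Since $K_{\mm, \nn}$ has exactly $\binom{\mm}{k}\binom{\nn}{k}k!$ matchings of size $k$, and each matched edge $\{i, j\}$ contributes a factor of $-\theta X_{ij}^2$ with $\expect{}{X_{ij}^2} = 1$, one obtains
\[
\expect{}{y^{\nn - \mm} \mydet{xy I - \theta X X^\trans}}
= \sum_{k = 0}^\mm (-\theta)^k \binom{\mm}{k}\binom{\nn}{k} k! \, x^{\mm - k} y^{\nn - k}.
\]
This is the bivariate form of the (rectangular) Laguerre polynomial, and the $y^{\nn - \mm}$ prefactor is exactly what is needed to homogenize the determinant so that the two variables carry degrees $\mm$ and $\nn$ respectively.

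Finally, I would compare this directly with the series defining $e^{-\theta \partial_x \partial_y} \applyto{x^\mm y^\nn}$. A termwise expansion gives
\[
e^{-\theta \partial_x \partial_y} \applyto{x^\mm y^\nn}
= \sum_{k = 0}^\infty \frac{(-\theta)^k}{k!} \frac{\mm!}{(\mm-k)!} \frac{\nn!}{(\nn-k)!} x^{\mm - k} y^{\nn - k},
\]
and the identity $\frac{\mm! \nn!}{k! (\mm-k)! (\nn-k)!} = \binom{\mm}{k}\binom{\nn}{k} k!$ matches the two expressions coefficient by coefficient.

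The main obstacle is the reduction step --- verifying that only edge-pairing contributions survive in the expected characteristic polynomial. In Lemma~\ref{lem:hermite} this was subsumed by the Godsil-Gutmann identification of expected characteristic polynomials with matching polynomials for edge-weighted graphs; here one needs the bipartite analogue applied to the incidence structure underlying $X X^\trans$. The author's forward reference to Lemma~\ref{lem:myCLT} in Section~\ref{sec:cp} suggests that this step will be handled via a CLT-style derivation inside the finite free probability framework, which would bypass the combinatorial argument entirely and provide a uniform treatment covering both Lemma~\ref{lem:hermite} and Lemma~\ref{lem:laguerre}.
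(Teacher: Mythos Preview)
Your outline is sound and would give a valid proof, but it takes a different route from the paper. The paper defers the argument to Section~\ref{sec:cp}: Lemma~\ref{lem:myCLT} establishes a central-limit identity for the generalized polynomial $q_{A,B}$ via Theorem~\ref{thm:lg}, Corollary~\ref{cor:formula} specializes it to Gaussian (and hence arbitrary mean-zero, unit-variance) entries, and Lemma~\ref{lem:laguerre} is then the two-variable reduction. Your approach is instead the bipartite analogue of the Godsil--Gutmann argument used for Lemma~\ref{lem:hermite}. The obstacle you flag is actually smaller than you fear: Cauchy--Binet gives $e_k(XX^\trans) = \sum_{|S|=|T|=k} \mydet{X_{S,T}}^2$, and since distinct rows of $X$ are independent one computes $\expect{}{\mydet{X_{S,T}}^2} = k!$ directly from mean zero and unit variance --- no entry ever appears to a power above two in any monomial, so higher moments never enter, and the only cancellation needed is between permutation pairs $\sigma \neq \tau$ in the squared determinant. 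Your route is shorter and self-contained for this single lemma; the paper's CLT route is chosen because it extends uniformly to the three-variable polynomial $q_{A,B}(x,y,z)$ needed for the GSVD analysis in Section~\ref{sec:gsvdbm}, and because it makes the semigroup $\theta \mapsto e^{-\theta \partial_x \partial_y}$ emerge without any combinatorics.
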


A combination of Theorem~\ref{thm:rect_conv} and Lemma~\ref{lem:laguerre} 
imply that for a given matrix $B \in \MM_{\mm, \nn}$ with 
\[
p_{BB^\trans}(x) = \mydet{x I - BB^\trans}
\]
we have the formula
\begin{equation}\label{eq:laguerre}
y^{\nn-\mm} \expect{}{\mydet{x y I - (B + X_{\mm, \nn})(B + X_{\mm, 
\nn})^\trans}}
= 
e^{-\theta \partial^2} \applyto{ y^{\nn-\mm} p_{BB^\trans}(x y) }.
\end{equation}

We now proceed similarly to Section~\ref{sec:hermite}.
For the sake of simplicity, we will assume $\nn \geq \mm$ and then find a  
differential equation that \eqref{eq:laguerre} satisfies.
Let $p(x, y)$ be an arbitrary polynomial and let
\[
q(x, y, t) = e^{-\theta \partial_x \partial_y} \applyto{ p(x, y) }.
\]
Now define the root path $r(\theta)$ so that $q(r(\theta), 1, \theta) = 0$.
Taking a derivative in $\theta$, we have
\[
0 = \frac{\partial}{\partial \theta} q(r(\theta), 1, \theta) 
= q_1(r(\theta), 1, \theta) r'(\theta) + q_3(r(\theta), 1, \theta)
\]
so (setting $\theta = 0$)
\[
0 = p_1(r, 1) r' - p_{12}(r, 1) 
\]
or
\begin{equation}\label{eq:LDE}
r'(0) = \frac{p_{12}(r, 1)}{p_1(r, 1)}.
\end{equation}
We can now use the fact that $x$ and $y$ play similar roles in 
\eqref{eq:laguerre}, 
so that for $p(x, y) = y^{\nn-\mm} p_{BB^\trans}(x y)$ we have
\[
p_1 = y^{\nn-\mm+1}p'_{BB^\trans}(x y)
\AND
p_2 = (\nn-\mm) y^{\nn-\mm-1}p_{BB^\trans}(x y) + x y^{\nn-\mm}p'_{BB^\trans}(x 
y).
\]
Hence we can write
\[
p_2 = (\nn-\mm) \frac{p}{y} + \frac{x}{y} p_1
\AND
p_{12} = (\nn-\mm+1) \frac{p_1}{y} + \frac{x}{y} p_{11}
\]
Now if we set $y = 1$, we see that \eqref{eq:LDE} becomes 
\[
r'(0) = (\nn-\mm+1) + r \frac{p_{11}(r, 1)}{p_1(r, 1)}.
\]
and so for $x = \lambda_i$, \eqref{eq:deriv_trick} implies the evolution of the 
roots of $p_{BB^\trans}$ satisfy the differential equations
\[
\frac{\partial \lambda_i}{\partial \theta}
= (\nn-\mm+1) + \lambda_i \sum_{j \neq i} \frac{2}{\lambda_i - \lambda_j}
= \nn + \sum_{j \neq i} \frac{\lambda_i + \lambda_j}{\lambda_i - 
\lambda_j}
\]
So, in particular, we see that the evolution again exhibits behavior similar to 
the drift term in the Brownian 
motion system \eqref{eq:laguerre_SDE}.
Furthermore, one can consider this correspondence to be ``curious'' for the 
same reasons as the Hermite case.

\section{A tool for generalized singular values}\label{sec:poly}

%We start by briefly reviewing the singular value decomposition and generalized 
%singular value decomposition and their associated ``characteristic 
%polynomials'' as motivation.
%
%\subsection{The singular value decomposition} \label{sec:svd}
%Recall that for a matrix $A \in \MM_{\mm, \nn}$ the {\em singular value 
%decomposition (SVD)} asserts that we can write
%\[
%A = U \Sigma V^\trans
%\]
%where 
%\begin{itemize}
%\item $U \in \MM_{\mm, \mm}$ satisfies $U^\trans U = U U^\trans = I_{\mm}$
%\item $V \in \MM_{\nn, \nn}$ satisfies $V^\trans V = V V^\trans = I_{\nn}$
%\item $\Sigma \in \MM_{\mm, \nn}$ is pseudo-diagonal and nonnegative.
%\end{itemize}
%The diagonal entries of $\Sigma$ are known as the {\em singular values} and in 
%fact one can calculate the singular values without the rest of the SVD, simply 
%by knowing that the decomposition exists.
%This is because if $A = U \Sigma V^\trans$ is an SVD, then 
%\[
%A^\trans A 
%= (V \Sigma^\trans U^\trans) (U \Sigma V^\trans) 
%= V \Sigma^\trans \Sigma V^\trans.
%\]
%and so the singular values can be found directly as the square roots of the 
%zeroes of the characteristic polynomial 
%\begin{equation}\label{eq:svdcp}
%\mydet{x I - A^\trans A}.
%\end{equation} 

We start by briefly reviewing the generalized singular value decomposition and 
and its associated ``characteristic polynomial'' as motivation.

\subsection{The generalized singular value decomposition}\label{sec:gsvd}

The {\em generalized singular value decomposition (GSVD)} is often described as 
a decomposition on a pair of matrices, however we prefer the approach of 
\cite{vanloan} of using a single block matrix.
For simplicity, we will restrict to the case when this block matrix has full 
column rank, however we will find a way to eliminate this requirement shortly.  

For a matrix $M \in \MM_{\ss+\tt, \kk}$ with rank $\kk$ and block form
\[
M = \begin{bmatrix}
M_1 \\
M_2
\end{bmatrix}
\begin{array}{c} \ss \\ \tt \end{array}
\]
the GSVD provides a simultaneous decomposition of $M_1$ and $M_2$ into 
\[
M_1 = U_1 C H
\AND
M_2 = U_2 S H
\]
where 
\begin{itemize}
\item $U_1 \in \MM_{\ss, \ss}$ satisfies $U_1^\trans U_1 = U_1 U_1^\trans
= I_{\ss}$
\item $U_2 \in \MM_{\tt, \tt}$ satisfies $U_2^\trans U_2 = U_2 U_2^\trans 
= I_{\tt}$
\item $C \in \MM_{\ss, \kk}$ and $S \in \MM_{\tt, \kk}$ are 
pseudo-diagonal with $C^\trans C + S^\trans S = I_{\kk}$, and
\item $H$ is an $\kk \times \kk$ invertible matrix.
\end{itemize}
The diagonal entries of $C$ and $S$ satisfy $c_i^2 + s_i^2 = 1$, and 
because of this, these matrices are often referred to as {\em cosine} and 
{\em sine} matrices.

The standard singular value decomposition can then be recovered by letting $M_2 
= I$ and noting that this implies that $M_1$ has the decomposition
\[
M_1 = U_1 C S^{-1} U_2.
\]
When $M$ has rank $\kk$, there is an easy way to calculate the generalized 
singular values without needing to form the entire decomposition.
Letting $W_1 = M_1^\trans M_1$ and $W_2 = M_2^\trans M_2$, it is easy to check 
that 
\begin{equation}\label{eq:W}
W = (W_1 + W_2)^{-1/2} W_1 (W_1 + W_2)^{-1/2}
\end{equation}
is a positive semidefinite Hermitian matrix that has the same 
eigenvalues\footnote{What one actually calls the ``generalized singular 
values'' ($c_i$?, $c_i/s_i$?) is subject to interpretation, but all 
interpretations are functions of the eigenvalues of $C^\trans C$, which will  
always be our subject of interest in this paper.} as $C^\trans C$.
Letting
\begin{equation}\label{eq:pregsvdcp}
\mydet{x I - (W_1 + W_2)^{-1/2} W_1 (W_1 + W_2)^{-1/2}} 
= \mydet{W_1 + W_2}^{-1} 
\mydet{ x (W_1 + W_2) - W_1 }.
\end{equation}
and so we can use the characteristic polynomial
\begin{equation}\label{eq:gsvdcp}
\mydet{ x (W_1 + W_2) - W_1 }
= 
\mydet{ (x-1) W_1 + x W_2 }
\end{equation}
which has the added benefit that it does not require $W_1 + W_2$ to be 
invertible.
\subsection{A generalized characteristic polynomial}\label{sec:cp}

For matrices $A \in \MM_{\ss, \kk}$ and $B \in \MM_{\tt, \kk}$ we will consider 
the polynomial 
\begin{equation}\label{eq:xyzcp}
p_{A, B}(x, y, z) = \mydet{x I + y A^\trans A + z B^\trans B}.
\end{equation}
Note that $p_{A, B}(x, 0, -1)$ reduces to the polynomial $p_{BB^\trans}(x)$ 
from Section~\ref{sec:laguerre} and $p_{A, B}(0, 
z-1, 
z)$ reduces to (\ref{eq:gsvdcp}). 
Hence understanding the evolution of the polynomial
\begin{equation}\label{eq:cpX}
p_{A + \sqrt{\theta} X_{\ss, \kk}, B + \sqrt{\theta} X_{\tt, \kk}}
\end{equation}
will allow us to derive the information we want regarding both decompositions.

The main property of $X_{\mm, \nn}$ that we will exploit in our analysis is the 
fact that it is $\SPM$--bi-invariant. 
In particular, it was shown in \cite{lg} that for any $\SPM$--bi-invariant 
matrices $C, D$, the coefficients of
\[
\expect{}{ p_{A + C, B + D}(x, y, z) }
\]
are each multilinear functions of the coefficients of 
\[
\expect{}{ p_{A, B}(x, y, z) }
\AND
\expect{}{ p_{C, D}(x, y, z) }.
\]

To state the correspondence, it will be beneficial (for the moment) to instead 
work with a slight transformation of the polynomials $p_{A, B}$:
\[
q_{A, B}(x, y, z) = y^{\ss}z^{\tt} p(x, 1/y, 1/z).
\]
The results of \cite{lg} then imply the following:

\begin{theorem}\label{thm:lg}
Let $A, C \in \MM_{\ss, \kk}$ and $B, D \in \MM_{\tt, \kk}$ be random matrices 
with $C$ and $D$ $\SPM$--bi-invariate.
Now let $F$ and $G$ be bivariate polynomials for which 
\[
\expect{}{q_{A, B}(x, y, z) }
= 
F(\partial_x \partial_y, \partial_x \partial_z)
\applyto{ x^{\kk} y^{\ss} z^{\tt}}
\AND
 \expect{}{q_{C, D}(x, y, z) }
=
G(\partial_x \partial_y, \partial_x \partial_z) 
\applyto{ x^{\kk} y^{\ss} z^{\tt}}.
\]
Then 
\[
\expect{}{q_{A + C, B + D}(x, y, 
z)} 
= 
F(\partial_x \partial_y, \partial_x \partial_z)
G(\partial_x \partial_y, 
\partial_x \partial_z)
\applyto{ x^{\kk} y^{\ss} z^{\tt}}.
\]
\end{theorem}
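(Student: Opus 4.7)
My plan is to deduce this from the convolution identity in \cite{lg} by recasting everything in the differential-operator language. The proof splits naturally into (i) identifying the image of the ``substitution into $x^\kk y^\ss z^\tt$'' map, (ii) invoking the bilinear convolution formula from \cite{lg} that governs how adding an $\SPM$--bi-invariant matrix affects the coefficients, and (iii) checking that under this substitution, multiplication of the bivariate polynomials $F,G$ matches the \cite{lg} convolution.

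First I would pin down the degree structure. Because $p_{A,B}(x,y,z) = \mydet{xI + yA^\trans A + zB^\trans B}$ is a determinant of a $\kk\times\kk$ matrix in which every row is affine in $(x,y,z)$, its expansion is homogeneous of total degree $\kk$: every monomial $x^a y^b z^c$ satisfies $a+b+c = \kk$. Moreover, $A^\trans A$ and $B^\trans B$ have rank at most $\ss$ and $\tt$ respectively, so the degrees in $y$ and $z$ are bounded by $\ss$ and $\tt$. Consequently $q_{A,B}(x,y,z) = y^\ss z^\tt p_{A,B}(x,1/y,1/z)$ is a genuine polynomial lying in the subspace
\[
V_{\kk,\ss,\tt} = \mathrm{span}\bigl\{ x^{\kk-i-j} y^{\ss-i} z^{\tt-j} \;:\; i+j\leq \kk,\; i\leq \ss,\; j\leq \tt\bigr\}.
\]

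Next I would analyze the map $\Phi: F(\xi,\eta) \mapsto F(\partial_x\partial_y, \partial_x\partial_z)\applyto{x^\kk y^\ss z^\tt}$. A direct computation gives
\[
\partial_x^{i+j}\partial_y^i\partial_z^j \applyto{x^\kk y^\ss z^\tt}
= \frac{\kk!\,\ss!\,\tt!}{(\kk-i-j)!(\ss-i)!(\tt-j)!}\, x^{\kk-i-j} y^{\ss-i} z^{\tt-j},
\]
so $\Phi$ sends the monomial basis $\{\xi^i\eta^j\}$ (with $i+j \leq \kk,\ i\leq \ss,\ j\leq \tt$) bijectively onto a scalar rescaling of the monomial basis of $V_{\kk,\ss,\tt}$. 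Hence $\Phi$ is a linear isomorphism, and the polynomials $F,G$ in the statement of the theorem are uniquely determined by $\expect{}{q_{A,B}}$ and $\expect{}{q_{C,D}}$.

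The last step is to transfer the convolution identity from \cite{lg}. That reference establishes that when $C$ and $D$ are $\SPM$--bi-invariant, the coefficients of $\expect{}{q_{A+C,B+D}}$ are a specific bilinear function of the coefficients of $\expect{}{q_{A,B}}$ and $\expect{}{q_{C,D}}$, i.e.\ there is a convolution $\boxplus$ on $V_{\kk,\ss,\tt}$ with $\expect{}{q_{A+C,B+D}} = \expect{}{q_{A,B}} \boxplus \expect{}{q_{C,D}}$. By the bilinearity of $\boxplus$ and of polynomial multiplication, it suffices to check $\Phi(\xi^i\eta^j \cdot \xi^k\eta^l) = \Phi(\xi^i\eta^j)\boxplus \Phi(\xi^k\eta^l)$ on a single pair of basis elements. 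The left side is, by the formula above, a known scalar multiple of $x^{\kk-(i+k)-(j+l)} y^{\ss-(i+k)} z^{\tt-(j+l)}$, and the right side is what \cite{lg} assigns to the convolution of the two rescaled monomials. The main (and really only) obstacle is therefore verifying that the multinomial factors $\kk!/(\kk-i-j)!$, $\ss!/(\ss-i)!$, $\tt!/(\tt-j)!$ arising from $\Phi$ recombine correctly to reproduce the weights that \cite{lg} assigns to $\boxplus$; this is the bookkeeping that makes the differential-operator formulation cleaner than the raw coefficient identity, and once the combinatorial matching is checked on monomials, extending by linearity completes the proof.
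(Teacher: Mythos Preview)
The paper does not supply an independent proof of this theorem: it is stated immediately after the sentence ``The results of \cite{lg} then imply the following,'' and is treated as a direct reformulation of the bilinear coefficient identity established in \cite{lg}. Your proposal follows exactly this route---defer to the convolution formula in \cite{lg} and then translate into differential-operator language---so in spirit you are doing the same thing the paper does, only with an attempt to make the translation step explicit.

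That said, the proposal as written stops at the point where the actual work would begin. You correctly reduce the claim to the identity $\Phi(F\cdot G)=\Phi(F)\boxplus\Phi(G)$ and observe that by bilinearity this need only be checked on pairs of monomials $\xi^i\eta^j,\xi^k\eta^\ell$, but you then declare the resulting multinomial bookkeeping to be ``the main (and really only) obstacle'' without carrying it out. Since you never write down the explicit form of $\boxplus$ from \cite{lg}, there is no way to confirm from your text that the factorial weights $\kk!/(\kk-i-j)!,\ \ss!/(\ss-i)!,\ \tt!/(\tt-j)!$ coming from $\Phi$ actually match what \cite{lg} assigns; the proposal is a plan, not a proof. (The paper sidesteps this entirely by treating the theorem as a quotation from \cite{lg}, where the differential-operator formulation is already established.) One small slip: ``on a single pair of basis elements'' should read ``on pairs of basis elements''---bilinearity reduces the check to the basis, not to a single instance.
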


Given Theorem~\ref{thm:lg}, an obvious next step to computing the polynomial in 
(\ref{eq:cpX}) would be to find the value of the polynomial 
$\expect{}{q_{a X_{\ss, \kk}, b 
X_{\tt, 
\kk}}(x, y, z)}$ for general $a, b \in \R$.
This can be done combinatorially (as in Theorem~\ref{lem:hermite}), but we 
will find it easier (and more instructive) to use a central limit theorem 
argument.

\begin{lemma} \label{lem:myCLT}
Let $\{ A_i \}_{i=1}^\infty \subseteq \MM_{\ss, \kk}$ and $\{ B_i 
\}_{i=1}^\infty \subseteq \MM_{\tt, \kk}$ be sequences of independent random 
$\SPM$--bi-invariant matrices for which 
\[
\expect{}{\mytr{A_i A_i^\trans}} 
= \theta_1 \ss \kk \AND 
\expect{}{\mytr{B_i B_i^\trans}} 
= \theta_2 \tt \kk
\]
for all $i$.
Define the random matrices
\[
C_{\mm} 
=  \frac{\sum_{i=1}^n A_i}{\sqrt{\mm}}
\AND 
D_{\mm} 
=  \frac{\sum_{i=1}^n B_i}{\sqrt{\mm}}.
\]
Then
\begin{equation}\label{eq:bmpoly}
\lim_{\mm \to \infty} 
%U_{\ss}^y \circ U_{\tt}^z \applyto{ 
%\expect{}{p_{C_{\mm}, D_{\mm}}(x, y, z)}}
\expect{}{q_{C_{\mm}, D_{\mm}}(x, y, z)}
= 
e^{\theta_1 \partial x \partial y + \theta_2 \partial_x \partial_z} 
\applyto{x^{\kk} y^{\ss} z^{\tt}}
\end{equation}
\end{lemma}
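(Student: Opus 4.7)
The plan is to apply Theorem~\ref{thm:lg} iteratively and then carry out a central-limit style expansion in the two commuting operators $a := \partial_x \partial_y$ and $b := \partial_x \partial_z$. For each index $i$, let $F_i(a, b)$ be the bivariate polynomial satisfying
\[
\expect{}{q_{A_i, B_i}(x, y, z)} = F_i(a, b) \applyto{ x^{\kk} y^{\ss} z^{\tt} };
\]
such a polynomial exists and is uniquely determined because $q_{A_i, B_i}$ is supported on the monomials $x^{\kk-\alpha-\beta} y^{\ss-\alpha} z^{\tt-\beta}$ with $\alpha \leq \ss$, $\beta \leq \tt$, $\alpha + \beta \leq \kk$.

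Iterating Theorem~\ref{thm:lg} (using independence of the pairs $(A_i, B_i)$ and the $\SPM$-bi-invariance of each) yields $\expect{}{q_{\sum_i A_i,\sum_i B_i}} = \prod_i F_i(a, b) \applyto{x^{\kk} y^{\ss} z^{\tt}}$. To incorporate the $1/\sqrt{m}$ normalization, I would establish the scaling identity $F_{tA, tB}(a, b) = F_{A, B}(t^2 a, t^2 b)$. This is immediate because replacing $(A, B)$ with $(tA, tB)$ multiplies $A^\trans A$ and $B^\trans B$ by $t^2$, hence multiplies each coefficient of $q_{A,B}$ at $x^{\kk - \alpha - \beta} y^{\ss - \alpha} z^{\tt - \beta}$ by $t^{2(\alpha + \beta)}$. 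Setting $t = 1/\sqrt{m}$ and combining gives
\[
\expect{}{q_{C_{\mm}, D_{\mm}}(x, y, z)} = \left(\prod_{i=1}^m F_i(a/m, b/m)\right) \applyto{x^{\kk} y^{\ss} z^{\tt}}.
\]
Next, I would inspect the low-order terms of each $F_i$. The constant term equals $1$ since $p_{A_i, B_i}$ has leading $x$-coefficient $1$; the coefficient of $y^1 z^0$ in $p_{A_i, B_i}$ is $\mytr{A_i^\trans A_i}\, x^{\kk-1}$, and matching this against $\partial_x \partial_y \applyto{x^{\kk} y^{\ss} z^{\tt}} = \kk \ss\, x^{\kk-1} y^{\ss-1} z^{\tt}$ identifies the coefficient of $a$ in $F_i$ as $\expect{}{\mytr{A_i^\trans A_i}}/(\kk \ss) = \theta_1$. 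Symmetrically the coefficient of $b$ is $\theta_2$, so
\[
F_i(a, b) = 1 + \theta_1 a + \theta_2 b + R_i(a, b),
\]
where $R_i$ collects terms of total degree at least $2$.

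The remaining step is a central-limit-style combinatorial limit. For fixed $(\alpha_0, \beta_0)$ with $\alpha_0 \leq \ss$, $\beta_0 \leq \tt$, $\alpha_0 + \beta_0 \leq \kk$, the coefficient of $a^{\alpha_0} b^{\beta_0}$ in $\prod_{i=1}^m F_i(a/m, b/m)$ has a main contribution from tuples selecting $\theta_1 a/m$ from exactly $\alpha_0$ factors, $\theta_2 b/m$ from exactly $\beta_0$ factors, and $1$ from the rest:
\[
\binom{m}{\alpha_0,\, \beta_0,\, m - \alpha_0 - \beta_0} \frac{\theta_1^{\alpha_0} \theta_2^{\beta_0}}{m^{\alpha_0 + \beta_0}} \longrightarrow \frac{\theta_1^{\alpha_0} \theta_2^{\beta_0}}{\alpha_0! \, \beta_0!}.
\]
Any tuple that uses a term from some $R_i$ has at most $\alpha_0 + \beta_0 - 1$ active indices (since such a term consumes at least two units of total $(a, b)$ degree) and therefore contributes $O(1/m)$. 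Because only finitely many $(\alpha_0, \beta_0)$ yield nonzero output when the operator is applied to $x^{\kk} y^{\ss} z^{\tt}$, this coefficient-wise convergence transfers to convergence of the resulting polynomial, and summing the limits reproduces the action of $\sum_{\alpha, \beta} \theta_1^\alpha \theta_2^\beta a^\alpha b^\beta/(\alpha! \beta!) = e^{\theta_1 \partial_x \partial_y + \theta_2 \partial_x \partial_z}$ on $x^{\kk} y^{\ss} z^{\tt}$, as claimed. The main (mild) obstacle is this final combinatorial bookkeeping; the earlier reductions are essentially mechanical.
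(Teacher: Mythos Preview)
Your proof is correct and follows essentially the same route as the paper's: apply Theorem~\ref{thm:lg} iteratively to factor $\expect{}{q_{C_m, D_m}}$ as an $m$-fold product of operators, identify the constant and linear terms of each factor as $1$, $\theta_1$, $\theta_2$, and pass to the limit. The paper compresses your scaling identity and combinatorial limit into the single line $\bigl(1 + \tfrac{\theta_1}{m}\partial_x\partial_y + \tfrac{\theta_2}{m}\partial_x\partial_z + O(1/m^2)\bigr)^m \to e^{\theta_1\partial_x\partial_y + \theta_2\partial_x\partial_z}$, whereas you spell out the coefficient-by-coefficient convergence explicitly; both arguments implicitly rely on a uniform bound on the higher-degree coefficients of the $F_i$ (the paper's ``boundedness of the underlying random variables'').
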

\begin{proof}
Due to the boundedness of the underlying random variables, we can write
\begin{align*}
\expect{}{ q_{A_i/\sqrt{\mm}, B_i/\sqrt{\mm}} (x, y, z) }
&= x^{\kk} y^{\ss} z^{\tt} 
+ \frac{\theta_1 \ss \kk}{\mm} x^{\kk-1} y^{\ss-1} z^{\tt} 
+ \frac{\theta_2 \tt \kk}{\mm} x^{\kk-1} y^{\ss} z^{\tt-1} + 
O\left(\frac{1}{m^2} \right)
\\&=
1 + \frac{\theta_1}{\mm} \partial_x \partial_y + \frac{\theta_2}{\mm} 
\partial_x \partial_z + 
O\left(\frac{1}{m^2} \right)
\applyto{x^{\kk} y^{\ss} z^{\tt}}
\end{align*}
as $m \to \infty$.
So by Theorem~\ref{thm:lg}, we have
\[
\expect{}{ q_{C_{\mm}, D_{\mm}} (x, y, z) }
= 
\left(1 + \frac{\theta_1}{\mm} \partial_x \partial_y + \frac{\theta_2}{\mm} 
\partial_x \partial_z + O\left(\frac{1}{{\mm}^2}\right) \right)^{\mm} 
\applyto{x^{\kk} 
y^{\ss} z^{\tt}}
\]
which converges to the claimed polynomial as $\mm \to \infty$.
 \end{proof}

We now show that (\ref{eq:bmpoly}) is (in fact) the same polynomial as 
$\expect{}{q_{a X_{\ss, \kk}, b X_{\tt, 
\kk}}(x, y, z)}$.

\begin{corollary}\label{cor:formula}
\[
\expect{}{q_{a X_{\ss, \kk}, b X_{\tt, 
\kk}}(x, y, z)}
= 
e^{a^2 \partial x \partial y + b^2 \partial_x \partial_z} 
\applyto{x^{\kk} y^{\ss} z^{\tt}}
\]
\end{corollary}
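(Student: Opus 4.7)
The plan is to deduce the corollary from Lemma~\ref{lem:myCLT}, using the key observation that $\expect{}{q_{aX_{\ss,\kk}, bX_{\tt,\kk}}(x,y,z)}$ depends on the entries of $X_{\ss,\kk}$ and $X_{\tt,\kk}$ only through their variances.

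First I would establish this distributional invariance. Expanding
\[
p_{aX, bY}(x,y,z) = \mydet{xI + a^2 y X^\trans X + b^2 z Y^\trans Y}
\]
via the Leibniz permutation formula, a direct index-bookkeeping argument shows that any fixed entry $X_{rs}$ can enter a given monomial only through the two factors $M_{s,\sigma(s)}$ and $M_{\sigma^{-1}(s),s}$, contributing at most once per factor; hence its total degree in any monomial is at most $2$, and similarly for every entry of $Y$. By the symmetry of the entries, any monomial in which some entry appears with odd degree is killed in expectation, so only degree-$0$ or degree-$2$ occurrences survive. Thus $\expect{}{p_{aX, bY}}$ --- and therefore $\expect{}{q_{aX, bY}}$ --- depends only on $a^2$, $b^2$, and the common second moment $1$ of the entries, not on any higher moments.

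Next I would apply Lemma~\ref{lem:myCLT} with $A_i$ i.i.d.\ copies of $aX_{\ss,\kk}$ and $B_i$ i.i.d.\ copies of $bX_{\tt,\kk}$. Both sequences are $\SPM$-bi-invariant, and the trace conditions give $\theta_1 = a^2$ and $\theta_2 = b^2$. The lemma then supplies
\[
\lim_{\mm \to \infty} \expect{}{q_{aS_\mm, bT_\mm}(x,y,z)} = e^{a^2 \partial_x \partial_y + b^2 \partial_x \partial_z} \applyto{x^\kk y^\ss z^\tt},
\]
where $S_\mm = \sum_{i=1}^\mm X_{\ss,\kk}^{(i)}/\sqrt{\mm}$ and $T_\mm$ is defined analogously.

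Finally, the entries of $S_\mm$ and $T_\mm$ are themselves i.i.d., symmetric, variance-$1$, with uniformly bounded higher moments, so the invariance from the first step yields $\expect{}{q_{aS_\mm, bT_\mm}} = \expect{}{q_{aX_{\ss,\kk}, bX_{\tt,\kk}}}$ for every $\mm \geq 1$. Passing to the limit then identifies the left-hand side of the corollary with the right-hand side. The main obstacle is the distributional invariance step: one must verify combinatorially that entries of $X$ and $Y$ appear with degree at most $2$ in each monomial of the Leibniz expansion. This is essentially the bipartite analog of the Godsil--Gutmann matching-polynomial identity invoked in the proof of Lemma~\ref{lem:hermite}, but requires careful tracking of row/column indices through the permutation expansion.
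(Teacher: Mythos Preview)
Your approach is correct and closely parallels the paper's, with one structural difference worth noting. The paper instantiates Lemma~\ref{lem:myCLT} with \emph{Gaussian} matrices $G_1, G_2$ and exploits the stability of Gaussians under convolution---so that $C_\mm \overset{d}{=} aG_1$ and $D_\mm \overset{d}{=} bG_2$ for every $\mm$---to identify the limit directly as $\expect{}{q_{aG_1, bG_2}}$; only afterward does it invoke the at-most-quadratic dependence on matrix entries to transfer the result from Gaussians to general $X_{\ss,\kk}, X_{\tt,\kk}$. You instead front-load the quadratic/invariance argument and use it to show that the pre-limit sequence $\expect{}{q_{aS_\mm, bT_\mm}}$ is already constant in $\mm$, bypassing Gaussians entirely. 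Your route is arguably more direct; the paper's has the minor advantage that Gaussian stability makes the limit identification immediate without needing to verify that the normalized partial sums $S_\mm, T_\mm$ themselves satisfy the hypotheses of the $X$-model.

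One small correction to your degree count: when $\sigma(s) = s$, the two factors $M_{s,\sigma(s)}$ and $M_{\sigma^{-1}(s),s}$ coincide as the single diagonal entry $M_{s,s} = x + a^2 y \sum_t X_{ts}^2 + b^2 z \sum_t Y_{ts}^2$, in which $X_{rs}$ already appears with degree~$2$ (not ``at most once per factor''). The conclusion that the total degree of $X_{rs}$ in any Leibniz monomial is at most~$2$ remains correct, but this diagonal case deserves a separate sentence when you write the argument out in full.
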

\begin{proof}
Let $G_1 \in \MM_{\ss, \kk}$ and $G_2 \in \MM_{\tt, \kk}$ be matrices of 
independent standard Gaussians and let $a$ and $b$ be real numbers.
We will apply Lemma~\ref{lem:myCLT} in the case where the $A_i$ are independent 
copies of $a G_1$ and the $B_i$ are independent copies of $b G_2$.

On the one hand, it is easy to see using the properties of standard Gaussians 
that 
\[
C_{\mm} 
= \frac{\sum_{i=1}^n A_i}{\sqrt{\mm}}
= a G_1
\AND 
D_{\mm} 
= \frac{\sum_{i=1}^n B_i}{\sqrt{\mm}}
= b G_2
\]
for all $\mm$.
Hence we have
%\[
%\lim_{\mm \to \infty} U_{\ss}^y \circ U_{\tt}^z
%\applyto{ 
% \expect{}{p_{C_{\mm}, D_{\mm}}(x, y, z)}
%}
%= U_{\ss}^y \circ U_{\tt}^z \applyto{ \expect{}{ p_{a G_1, b G_2}(x,y,z)} 
%}
%\]
\[
\lim_{\mm \to \infty}  \expect{}{q_{C_{\mm}, D_{\mm}}(x, y, z)}
= \expect{}{ q_{a G_1, b G_2}(x,y,z)} .
\]

On the other hand, it is easy to calculate that 
\[
\expect{}{\mytr{a^2 G_1^\trans G_1}} = a^2 \ss \kk
\AND
\expect{}{\mytr{b^2 G_2^\trans G_2}} = b^2 \tt \kk
\]
so Lemma~\ref{lem:myCLT} implies that 
%\[
%\lim_{\mm \to \infty} U_{\ss}^y \circ U_{\tt}^z
%\applyto{ 
% \expect{}{p_{C_{\mm}, D_{\mm}}(x, y, z)}
%}
%= e^{\theta_1 \partial x \partial y + \theta_2 \partial_x \partial_z} 
%\applyto{x^{\kk} y^{\ss} z^{\tt}}
%\]
\[
\lim_{\mm \to \infty} \expect{}{q_{C_{\mm}, D_{\mm}}(x, y, z)}
= e^{\theta_1 \partial x \partial y + \theta_2 \partial_x \partial_z} 
\applyto{x^{\kk} y^{\ss} z^{\tt}}
\]
Equating the two gives us 
\[
\expect{}{ q_{a G_1, b G_2}(x,y,z)} 
= 
e^{a^2 \partial x \partial y + b^2 \partial_x \partial_z} 
\applyto{x^{\kk} y^{\ss} z^{\tt}}
\]

The remainder of the proof follows from the fact that $q_{A, B}$ is at most 
quadratic in the entries of the matrices $A$ and $B$.
So when those entries are random variables, the only contributions to the 
polynomial come from the first two moments.
Hence having a matrix of independent standard Gaussians will give the same 
result as any set of independent mean $0$, variance $1$ random variables, and 
so we have
\[
\expect{}{q_{a X_{\ss, \kk}, b X_{\tt, 
\kk}}(x, y, z)}
= 
\expect{}{q_{a G_1, b G_2}(x, y, z)}
= 
e^{a^2 \partial x \partial y + b^2 \partial_x \partial_z} 
\applyto{x^{\kk} y^{\ss} z^{\tt}}
\]
as claimed.
\end{proof}

Theorem~\ref{thm:lg} combined with Corollary~\ref{cor:formula} gives us the 
following formula for any matrices $A \in \MM_{\ss, \kk}$ and $B \in \MM_{\tt, 
\kk}$:
\begin{equation}\label{eq:qs}
\expect{}{q_{A + a X_{\ss, \kk}, B + b X_{\tt, \kk}}(x, y, z }
= 
e^{a^2 \partial x \partial y + b^2 \partial_x \partial_z} \applyto { q_{A, 
B}(x, y, z) }.
\end{equation}
We end this subsection by showing how one can translate this back to the 
polynomials we are truly interested in.
For the sake of notation, for a polynomial $f = f(y)$, let us define the 
operator $U^y_k$ as
\begin{equation}\label{eq:uoper}
U^y_k \applyto{p(y)} = y^k p \left(\frac{1}{y} \right).
\end{equation}

\begin{lemma}\label{lem:trans}
Let $i, j$ be integers with $j \leq s$.
Then
\[
U_y^s \circ e^{\theta \partial_x \partial_y} \circ U_y^s \applyto{x^i y^j} 
= (1 + \theta y \partial_x)^{s-j} \applyto{ x^i y^j}
\]
\end{lemma}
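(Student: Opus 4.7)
The plan is to verify the identity directly on the monomial $x^i y^j$ by expanding both sides and matching coefficient by coefficient. Since $U_y^s$ and $e^{\theta \partial_x \partial_y}$ are linear, it suffices to handle a single monomial, and the condition $j \leq s$ ensures that every intermediate exponent of $y$ stays nonnegative so that all applications of $U_y^s$ are polynomial operations.

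First I would unwind the left-hand side. The innermost $U_y^s$ sends $x^i y^j$ to $x^i y^{s-j}$. Then applying $e^{\theta \partial_x \partial_y} = \sum_{k \geq 0} \frac{\theta^k}{k!} \partial_x^k \partial_y^k$ term by term produces
\[
\sum_{k=0}^{\min(i,\, s-j)} \frac{\theta^k}{k!} \cdot \frac{i!}{(i-k)!} \cdot \frac{(s-j)!}{(s-j-k)!}\, x^{i-k} y^{s-j-k},
\]
and a final application of $U_y^s$ replaces each $y^{s-j-k}$ by $y^{j+k}$.

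Next I would expand the right-hand side using the binomial theorem. Because $y$ and $\partial_x$ commute, $(1 + \theta y \partial_x)^{s-j}$ acts on $x^i y^j$ as
\[
\sum_{k=0}^{s-j} \binom{s-j}{k} \theta^k\, y^k \partial_x^k \applyto{x^i y^j}
=
\sum_{k=0}^{\min(i,\, s-j)} \binom{s-j}{k} \theta^k \cdot \frac{i!}{(i-k)!}\, x^{i-k} y^{j+k}.
\]
Comparing the two expressions, the $x^{i-k} y^{j+k}$ coefficients agree exactly, since $\frac{1}{k!} \cdot \frac{(s-j)!}{(s-j-k)!} = \binom{s-j}{k}$, and both sums truncate at the same index (the right-hand side naturally stops at $k = i$ because $\partial_x^k x^i = 0$ for $k > i$). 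This establishes the identity on an arbitrary monomial and hence, by linearity, on every polynomial.

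There is no real obstacle here; the statement is essentially a bookkeeping identity. The only point that deserves care is tracking the role of the hypothesis $j \leq s$, which guarantees that $U_y^s$ applied in the middle produces a genuine polynomial (i.e., $s - j \geq 0$) and that the binomial $\binom{s-j}{k}$ is the standard one. If desired, one can repackage the calculation more conceptually by noting that conjugation by $U_y^s$ on polynomials of $y$-degree at most $s$ sends $\partial_y$ to the operator $s - y \partial_y$ times $y$, so that $U_y^s \partial_x \partial_y U_y^s$ acts on $x^i y^j$ as $y \partial_x (s - j)$, making the exponential collapse into the binomial power $(1+\theta y \partial_x)^{s-j}$; but the direct monomial computation above is the cleanest route.
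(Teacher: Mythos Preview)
Your proof is correct and follows essentially the same route as the paper: apply $U_y^s$ to get $x^i y^{s-j}$, expand the exponential termwise, apply $U_y^s$ again, and recognize the resulting sum as the binomial expansion of $(1+\theta y\partial_x)^{s-j}$ acting on $x^i y^j$. The only difference is cosmetic --- you write out both sides and match coefficients explicitly, whereas the paper collapses the comparison into a single chain of equalities.
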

\begin{proof}
We compute: 
\begin{align*}
U_y^s \circ e^{\theta \partial_x \partial_y} \circ U_y^s \applyto{x^i y^j}
&= U_y^s \circ e^{\theta \partial_x \partial_y} \applyto{x^i y^{s-j}}
\\&= U_y^s\applyto{\sum_k \frac{\theta^k}{k!} {\partial_x^k \partial_y^k}  
\applyto{ x^i y^{s-j}}}
\\&= U_y^s \applyto{ \sum_k \theta^k \partial_x^k \binom{s-j}{k} x^i y^{s-j-k}}
\\&= (1 + \theta y \partial_x)^{s-j} \applyto{ x^i y^j}.
\end{align*}
\end{proof}

\subsection{Returning to the decompositions}

We now return the respective decompositions to give a proof of 
Lemma~\ref{lem:laguerre} and the corresponding result for generalized singular 
values.
Consider formula (\ref{eq:qs}) in the case when $A$ and $B$ are $0$ matrices 
and $a = b = 1$.
Then $q_{A, B}(x, y, z) = x^\kk$ and so Lemma~\ref{lem:trans} implies
\[
\expect{}{ p_{X_{\ss, \kk}, X_{\tt, \kk}}(x, y, z) } 
= (1 + y \partial_x)^{\ss}(1 + z \partial_x)^{\tt} \applyto{ x^{\kk}}.
\]
As noted at the beginning of Section~\ref{sec:cp}, we can get the singular 
value 
polynomial by setting $y = -1$ and $z = 0$:
\[
\expect{}{ \mydet{x I - X^\trans_{\ss, \kk}X_{\ss, \kk} }} 
= (1 - \partial_x)^{\ss} \applyto{ x^{\kk}}
= (-1)^\kk \kk! L^{(\ss-\kk)}_\kk(x)
\]
where $L_n^{(\alpha)}(x)$ is a Laguerre polynomial \cite{szego}.
Setting $x = 0$ and $y = z-1$, on the other hand, gives us the generalized 
singular value polynomial:
\[
\expect{}{ \mydet{(z-1) X^\trans_{\ss, \kk}X_{\ss, \kk} + z X^\trans_{\tt, 
\kk}X_{\tt, \kk} }}
=  (1 + (z-1)\partial_x)^{\ss}(1 + z\partial_x)^{\tt} \applyto{ x^{\kk}}
= P^{(\tt-\kk, \ss-\kk)}_\kk(2x-1)
\]
where $P_n^{(\alpha, \beta)}(x)$ is a Jacobi polynomial \cite{szego}.
For those familiar with random matrix theory, these two results are perhaps not 
surprising as the random matrix ensembles one would get by using i.i.d. 
standard normals random variables in the random matrices $X_{\kk, \kk}, X_{\ss, 
\kk}$ and $X_{\tt, \kk}$ are often called the Hermite, 
Laguerre, and Jacobi ensembles (for this reason) \cite{forrester}.

\section{Brownian Motion}\label{sec:bm}

\renewcommand{\dd}{k}
\newcommand{\pp}{\tilde{p}}
\newcommand{\xx}{\tilde{x}}
\newcommand{\yy}{\tilde{y}}
\newcommand{\zz}{\tilde{z}}
\newcommand{\ww}{\tilde{w}}

Using the results in Section~\ref{sec:cp}, we have reduced the problem of 
interest to understanding the effects of a certain differential operator on a 
generalized characteristic polynomial.
The operator of interest is $Q^\theta_{s, t}$ defined by 
\begin{equation}\label{eq:bm}
Q^\theta_{s, t}[x^i y^j z^k] \mapsto (1 + \theta y \partial_x)^{s-j} (1 + 
\theta z \partial_x)^{t-k} \applyto{x^i y^j z^k}
\end{equation}
and then extended to general polynomials linearity.

To understand the evolution of these polynomials, we first derive a 
differential equation that this operator satisfies.
For a fixed polynomial $p(x, y, z)$, we define the polynomial 
\[
\pp(x, y, z, \theta) 
= Q^{\theta}_{s, t} \applyto{ p(x, y, z) }
\]
and then consider variables $\xx = \xx(\theta), \yy = \yy(\theta), \zz = 
\zz(\theta)$ for which 
\[
\pp(\xx(\theta), \yy(\theta), \zz(\theta), \theta) = 0.
\]
In particular, we can take a derivative in $\theta$ to get 
\[
\pp_1 \xx' + \pp_2 \yy' + \pp_3 \zz' + \pp_4 = 0.
\]
\begin{lemma}
\[
\pp_4(x, y, z, 0) = (\ss y + \tt z)\pp_1(x, y, z, 0) - y^2 \pp_{12}(x, y, z, 0) 
- 
z^2 
\pp_{13}(x, y, z, 0)
\]
\end{lemma}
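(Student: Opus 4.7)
The plan is to reduce the identity to a monomial calculation, since both sides are linear in $p$. Writing $p(x,y,z) = x^i y^j z^k$ (and extending by linearity at the end), I have
\[
\pp(x,y,z,\theta) = (1+\theta y \partial_x)^{s-j}(1+\theta z \partial_x)^{t-k}\applyto{x^i y^j z^k},
\]
and I need to compare two things at $\theta = 0$: the $\theta$-derivative of $\pp$, and a certain first-order differential expression in $(x,y,z)$ applied to $\pp(\cdot, 0) = p$.

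First I would compute the left-hand side. Since the two factors $(1+\theta y \partial_x)^{s-j}$ and $(1+\theta z \partial_x)^{t-k}$ both evaluate to the identity at $\theta=0$, differentiating in $\theta$ and setting $\theta=0$ leaves only their linear-in-$\theta$ contributions, giving the operator $(s-j)y\partial_x + (t-k)z\partial_x$ applied to $x^i y^j z^k$. This produces
\[
\pp_4(x,y,z,0) = i(s-j)\, x^{i-1} y^{j+1} z^{k} + i(t-k)\, x^{i-1} y^{j} z^{k+1}.
\]

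Next I would compute the right-hand side using $\pp(x,y,z,0) = x^i y^j z^k$ directly: $\pp_1 = i x^{i-1} y^j z^k$, $\pp_{12} = ij\, x^{i-1} y^{j-1} z^k$, and $\pp_{13} = ik\, x^{i-1} y^j z^{k-1}$. Substituting,
\[
(\ss y + \tt z)\pp_1 - y^2 \pp_{12} - z^2 \pp_{13} = i(\ss-j)\, x^{i-1} y^{j+1} z^k + i(\tt-k)\, x^{i-1} y^j z^{k+1},
\]
which matches the left-hand side. The conceptual content is simply the Euler-type identity $y\partial_y\{y^j\} = j y^j$ (and the analogue in $z$), which is why $\ss y - y^2 \partial_y$ acting in the right slot converts $\ss$ to $\ss - j$.

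There is no real obstacle here; the only thing worth being careful about is the convention $s-j, t-k \geq 0$ implicit in the definition of $Q^\theta_{s,t}$ on the relevant monomials (those appearing in a characteristic polynomial $p_{A,B}$ of the form $\expect{}{q_{A,B}}$), which ensures the binomial expansions terminate and the derivative at $\theta=0$ is exactly the linear term. Finally, linearity in $p$ extends the identity from monomials to arbitrary polynomials, completing the proof.
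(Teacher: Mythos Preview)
Your proof is correct and follows essentially the same approach as the paper: both reduce to monomials by linearity, expand $Q^\theta_{s,t}$ to first order in $\theta$ to compute $\pp_4(x,y,z,0)$, and then verify the identity monomial by monomial via the relation $i(s-j)x^{i-1}y^{j+1}z^k = (s y\,\partial_x - y^2\,\partial_x\partial_y)\{x^i y^j z^k\}$ (and its $z$-analogue). The only cosmetic difference is that the paper rewrites the left-hand side into the right-hand side, whereas you compute both sides independently and match them.
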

\begin{proof}
Write 
\[
p(x, y, z) = \sum_{i, j, k} c_{i, j, k} x^i y^j z^k
\]
where the degree of $y$ is at most $s$ and the degree of $z$ is at most $t$.
Then 
\begin{align*}
Q^{\theta}_{s, t} \applyto{ p(x, y, z) }
&= \sum_{i, j, k} c_{i, j, k} (1 + \theta y \partial_x)^{\ss-j} (1 + \theta z 
\partial_x)^{\tt-k} \applyto{x^i y^j z^k}
\\&= \sum_{i, j, k} c_{i, j, k} \left(1 + \theta( (\ss-j) y \partial_x + 
(\tt-k) z 
\partial_x) + O(\theta^2) \right) \applyto{x^i y^j z^k}
\\&= \sum_{i, j, k} c_{i, j, k} 
\left( x^i y^j z^k 
+ i(\ss-j) \theta x^{i-1}y^{j+1}z^k 
+ i(\tt-k) \theta x^{i-1}y^{j}z^{k+1}
\right) + O(\theta)^2
\end{align*} 
and so 
\[
\frac{\partial}{\partial \theta} Q^{\theta}_{\ss, \tt} \applyto{ p(x, y, z) } 
\bigg|_{\theta = 0}
= \sum_{i, j, k} i c_{i, j, k} 
\left( 
(\ss-j) x^{i-1}y^{i+1}z^k 
+(\tt-k) x^{i-1}y^{j}z^{k+1}
\right).
\]
Now we rewrite
\[
i (\ss-j) x^{i-1}y^{j+1}z^k 
= i \ss x^{i-1}y^{j+1}z^k - i j x^{i-1}y^{j+1}z^k
= \ss y \partial_x \applyto{x^i y^j z^k} - y^2 \partial x \partial y 
\applyto{x^i 
y^j z^k}
\]
and similarly 
\[
i (\tt-k) x^{i-1}y^{j}z^{k+1} 
= i \tt x^{i-1}y^{j}z^{k+1} - i k x^{i-1}y^{j}z^{k+1}
= \tt z \partial_x \applyto{x^i y^j z^k} - z^2 \partial x \partial z 
\applyto{x^i 
y^j z^k}.
\]
Hence 
\[
\pp_4(x, y, z, 0) 
= \frac{\partial}{\partial \theta} Q^{\theta}_{\ss, \tt} \applyto{ p(x, y, z) } 
\bigg|_{\theta = 0}
= (\ss y + \tt z) p_1(x, y, z) - y^2 p_{12}(x, y, z) - z^2 p_{13}(x, y, z)
\]
and the result follows from the fact that $\pp(x, y, z, 0) = p(x, y, z)$.
\end{proof}

It is worth mentioning that the equation we have derived: 
\[
\pp_1 \xx' + \pp_2 \yy' + \pp_3 \zz' 
= 
- (\ss y + \tt z)\pp_1 
+ y^2 \pp_{12} 
+ z^2 \pp_{13}
\]
still has a bit of freedom to be manipulated due to the homogeneity if $p$.
How we do that manipulation will depend on how we intend to use the polynomial.
For example, if our goal is going to be to plug in $\yy = \ww - 1$ and $\zz = 
\ww$ at 
some point, then the terms 
\[
y^2 p_{12}
\AND
z^2 p_{13}
\]
will not reduce to functions of $\ww$ easily.
We can fix this as follows:
\begin{lemma}
Let $p(x, y, z)$ be a $\dd$-homogeneous polynomial.
Then 
\[
y^2 p_{12} + z^2 p_{13} = (\dd-1)(y + z)p_1 - x(y+z)p_{11} - y z(p_{12} + 
p_{13}).
\]  
\end{lemma}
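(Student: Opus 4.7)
The plan is to use Euler's identity for homogeneous functions, differentiated once. Since $p$ is $k$-homogeneous, we have
\[
x p_1 + y p_2 + z p_3 = k p.
\]
The main step is to differentiate this identity once with respect to $x$, which yields
\[
p_1 + x p_{11} + y p_{12} + z p_{13} = k p_1,
\]
or equivalently $y p_{12} + z p_{13} = (k-1) p_1 - x p_{11}$. This is the key intermediate relation.

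Next, I would multiply both sides of this relation by $(y+z)$ to get
\[
(y+z)(y p_{12} + z p_{13}) = (k-1)(y+z) p_1 - x(y+z) p_{11}.
\]
Expanding the left-hand side gives $y^2 p_{12} + z^2 p_{13} + yz(p_{12} + p_{13})$, so moving the $yz(p_{12} + p_{13})$ term to the right-hand side yields exactly the stated identity.

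I do not anticipate any genuine obstacle: the argument is a one-line consequence of Euler's relation after a single $x$-differentiation. The only thing to be careful about is that Euler's identity applies cleanly because $p$ is homogeneous in all three variables simultaneously (not just in one of them), so differentiating in $x$ produces precisely the mixed partials $p_{12}$ and $p_{13}$ we want, with the right coefficients $y$ and $z$.
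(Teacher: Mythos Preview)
Your proof is correct and essentially identical to the paper's: both arrive at the key Euler-type relation $x p_{11} + y p_{12} + z p_{13} = (k-1)p_1$ (the paper by noting that $p_1$ is $(k-1)$-homogeneous, you by differentiating Euler's identity for $p$ in $x$), and then multiply by $y+z$ and rearrange. The paper phrases the last step as multiplying the relation separately by $y$ and by $z$ and adding, which is the same as your multiplication by $(y+z)$.
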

\begin{proof}
Since $p$ is $\dd$ homogeneous, $p_1$ is $\dd-1$ homogeneous, and so 
\[
x f_{11} + y f_{12} + z f_{13} = (\dd-1) f_1
\]
Solving for $f_{13}$ (and separately for $f_{12}$) then gives
\[
z^2 f_{13} = -x z f_{11} - y z f_{12} + z (d-1)f_{1}
\AND
y^2 f_{12} = -x y f_{11} - y z f_{13} + y (d-1)f_{1}.
\]
and the lemma follows by adding them together.
\end{proof}
Hence we have derived the following equation for $p$ at the point $(\xx, \yy, 
\zz)$ for $\theta = 0$:
\begin{equation}\label{eq:jacobipde}
p_1 \xx' + \pp_2 \yy' + \pp_3 \zz' 
= 
-(\ss-\dd+1)\yy p_1 
-(\tt-\dd+1) \zz p_1 
-\xx(\yy+\zz)p_{11} 
-\yy \zz(p_{12} + p_{13}).
\end{equation}
%\adam{Checked in Mathematica.} 

%\subsection{Laguerre process}\label{sec:svdbm}
%
%We now consider the substitution related to Section~\ref{sec:svd}.
%Setting $\yy = -1$ and $\zz = 0$ causes (\ref{eq:jacobipde}) to 
%simplify to 
%\[
%\xx' 
%= 
%(\ss-\kk + 1) + \xx \frac{p_{11}}{p_1}  
%\]
%where, for fixed $\yy$ and $\zz$, we can use (\ref{eq:deriv_trick}) to get 
%\[
%\frac{p_{11}}{p_1}
%= \sum_{j \neq i} \frac{2}{\lambda_i - \lambda_j}
%\]
%and so after some algebra (\ref{eq:jacobipde}) evaluated at $\xx = \lambda_i$ 
%becomes 
%\[
%\frac{\partial \lambda_i}{\partial \theta} = (\ss-\kk + 1) + \sum_{j \neq i} 
%\frac{2 
%\lambda_i}{\lambda_i - \lambda_j}.
%\]
%
%Compare this to the $\beta$-Laguerre process in random matrix theory, defined 
%on $\kk \times \kk$ matrices by the stochastic differential equation
%\[
%dX_t = \sqrt{X_t} dB_t + d B_t^\trans \sqrt{X_t} + \beta \ss I_{\kk} dt
%\]
%where $B_t$ is a $\kk \times \kk$ matrix whose entries are individual 
%$1$-dimensional Brownian motions \cite{demni}.
%This is known to have eigenvalue evolution
%\[
%\frac{\partial \lambda_i}{\partial t} = 
%2 \sqrt{\lambda_i} \frac{\partial b_i}{\partial t} + \beta \left( (\ss-\kk + 
%1) 
%+ 
%\sum_{j \neq i} \frac{2 
%\lambda_i}{\lambda_i - \lambda_j} \right)
%\]
%where the $b_i$ are independent Brownian motions, suggesting that \textbf{3.} 
%is (in fact) the ``non-random'' part of this process (as well as the part 
%that dominates as $\beta \to \infty$). 

\subsection{Jacobi Process}\label{sec:gsvdbm}

We now make the substitution described in \eqref{eq:gsvdcp}, letting
\[
h(x, u) = p(x, u-1, u)
\]
and setting $\yy = \ww-1$ and $\zz = \ww$ in (\ref{eq:jacobipde}) we get 
\begin{align*}
\xx' h_1(\xx, \ww) + \ww' h_2(\xx, \ww) 
=
&-(\ss-\kk+1)(\ww-1) h_1(\xx, \ww) - (\tt - \kk + 1) \ww h_1 (\xx, \ww)
\\&-\xx(2\ww - 1)h_{11}(\xx, \ww) - \ww(\ww-1)h_{12}(\xx, \ww)
\end{align*}
which for $\xx = 0$ simplifies to 
\begin{equation}\label{eq:prefinal}
\ww' \frac{h_2}{h_1}
=
-(\ss-\kk+1)(\ww-1) - (\tt - \kk + 1) \ww - \ww(\ww-1)\frac{h_{12}}{h_1}.
\end{equation}

We would like to compare this to the evolution of the random matrix version of 
the Jacobi process given in \eqref{eq:jacobi_SDE} with parameters $p = n_1$ and 
$q = n_2$.
At first glance, these are not the same, but we claim that this is because 
\eqref{eq:prefinal} is actually significantly more general than  
\eqref{eq:jacobi_SDE}.

Before addressing this, however, we wish to point out that not only are 
(\ref{eq:prefinal}) and 
(\ref{eq:jacobi_SDE}) different, they are {\em fundamentally} different.
In particular, (\ref{eq:jacobi_SDE}) is an eigenvalue evolution that only 
depends on the original eigenvalues.
That is, if we let 
\[
A_\theta 
= A + \sqrt{\theta} X_{\ss, \kk}
\AND
B_\theta 
= B + \sqrt{\theta} X_{\tt, \kk}
\]
then the evolution of (\ref{eq:jacobi_SDE}) would imply that one can compute 
the eigenvalues of
\begin{equation}\label{eq:matrixtheta}
(A_\theta A_\theta^T  + B_\theta B_\theta^T)^{-1/2} A_\theta A_\theta^T 
(A_\theta A_\theta^T  + B_\theta B_\theta^T)^{-1/2}
\end{equation}
as a function of the eigenvalues of 
\[
(A_0 A_0^T  + B_0 B_0^T)^{-1/2} A_0 A_0^T (A_0 A_0  + B_0 B_0)^{-1/2}.
\]
This is not the case for (\ref{eq:prefinal}), and for good reason. 
As we will see in Section~\ref{sec:simulation}, the eigenvalues of 
(\ref{eq:matrixtheta}) 
will depend on (among other things) the relationship between the eigenvectors 
of $A_0A_0^T$ and $B_0B_0^T$, and so to some extent it is actually surprising 
that we can get all of the necessary information even from the complete 
polynomial 
$h(\xx, \ww)$. 

\subsection{The matrix Jacobi process, revisited}\label{sec:gsvdbm2}

To see why the two solution differ, let us first review how the standard matrix 
Jacobi process is defined.
As we saw in Section~\ref{sec:gsvd}, the method for obtaining the generalized 
singular values of
\[
M = \begin{bmatrix}
M_1 \\
M_2
\end{bmatrix}
\begin{array}{c} \ss \\ \tt \end{array}
\]
was via the formula (\ref{eq:W}) where $W_1 = M_1^\trans M_1$ and $W_2 = 
M_2^\trans M_2$.
Note that $W_1 + W_2 = M^\trans M$, and that 
\[
M_1 = 
\begin{bmatrix}
I_{\ss} & 0
\end{bmatrix}
M
\]
and so another way to write (\ref{eq:W}) would be as 
\[
W = (M^\trans M)^{-1/2} M^\trans 
\begin{bmatrix}
I_{\ss} \\
0
\end{bmatrix}
\begin{bmatrix}
I_{\ss} & 0
\end{bmatrix}
M 
(M^\trans M)^{-1/2} M^\trans.
\]
One can simplify this using the (normal) singular value decomposition: letting 
$M = U \Sigma V^\trans$, and simplifying, we get that 
\[
W = 
\begin{bmatrix}
I_{\kk} & 0 \\
\end{bmatrix}
U^\trans 
\begin{bmatrix}
I_{\ss} \\
0
\end{bmatrix}
\begin{bmatrix}
I_{\ss} & 0
\end{bmatrix}
U
\begin{bmatrix}
I_{\kk} \\
0 
\end{bmatrix}
= 
C^\trans C
\]
where $C$ is the upper $\ss \times \kk$ corner of the unitary matrix $U \in 
\MM_{\ss + \tt, \ss+\tt}$.

When $M$ is a matrix of Gaussians, the derived distribution on $U$ is the 
Haar-distribution.
This shows the equivalence of two methods for creating a Jacobi ensemble (a 
fact that has been used numerous times in the literature 
\cite{collins,edelmanCS}):
\begin{enumerate}
\item as the generalized singular values of a random Gaussian matrix
\item as the singular values of the upper corner of a Haar-distributed 
unitary matrix
\end{enumerate}
The issue occurs when one tries to turn this into a process.
The standard matrix Jacobi process uses the ``upper corner'' model, replacing 
the Haar distribution with a {\em unitary} Brownian motion (starting, possibly 
at a fixed unitary matrix) \cite{doumerc}.
And this is precisely where the two processes diverge --- for general matrices 
$A_\theta$, approximating the left eigenvectors of $A_\theta$ with a unitary 
Brownian motion centered at the left eigenvectors of $A_0$ is only that (an 
approximation).
The actual distribution will depend on the singular values of $A$, which (in 
turn) will depend on the relationship between the two 
blocks.

On the other hand, if one was able to artificially force the singular values of 
$A_\theta$ to remain the same throughout the process, then this extra effect 
(while not necessarily zero) will not change, and so one can then hope to 
derive an evolution equation which holds for all $A_\theta$.
And this is (essentially) what is being done in the ``upper corner process'' 
--- by using a unitary Brownian motion, one is forcing all of the singular 
values of 
the $A_\theta$ to be $1$ (for all $\theta$)\footnote{This is often how Brownian 
motion is {\em defined} --- as the process on the space of unitary matrices 
which has instantaneous change equal to that of the additive process 
\cite{mehta}.}.
And it is not hard to show that for any $A_\theta$ where this is the case, 
dynamics in (\ref{eq:prefinal}) at that $\theta$ can be computed explicitly in 
terms of the current point configuration:

\begin{lemma}
Let $A \in \MM_{(\ss+\tt), \kk}$ be such that $A^\trans A = I_{\kk}$.
Then $h_1(0, u) = h_2(0, u)$ and 
\[
\frac{h_{12}(0, \lambda_i)}{h_1(0, \lambda_i)} = 
\sum_{j \neq i} \frac{2 
}{\lambda_i - \lambda_j}
\]
where $\{ \lambda_i \}_{i=1}^\kk$ are the roots of $h(0, u)$.
\end{lemma}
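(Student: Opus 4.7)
The plan is to unpack the definition of $h$ in terms of the underlying matrix $A$ and exploit the hypothesis $A^\trans A = I_\kk$ to collapse $h$ to a polynomial in a single variable, after which both claims follow from the identity \eqref{eq:deriv_trick} used earlier.

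Write $A = \begin{bmatrix} A_1 \\ A_2 \end{bmatrix}$ with $A_1 \in \MM_{\ss, \kk}$ and $A_2 \in \MM_{\tt, \kk}$. By construction (see \eqref{eq:xyzcp}) we have $p(x, y, z) = p_{A_1, A_2}(x, y, z) = \mydet{xI + y A_1^\trans A_1 + z A_2^\trans A_2}$, and the substitution defining $h$ gives
\[
h(x, u) = p(x, u-1, u) = \mydet{xI - A_1^\trans A_1 + u(A_1^\trans A_1 + A_2^\trans A_2)}.
\]
The first step is to use the hypothesis $A^\trans A = A_1^\trans A_1 + A_2^\trans A_2 = I_\kk$ to rewrite this as
\[
h(x, u) = \mydet{(x+u) I - A_1^\trans A_1} = g(x+u),
\]
where $g(w) := \mydet{wI - A_1^\trans A_1}$. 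In particular the roots $\lambda_1, \dots, \lambda_\kk$ of $h(0, u)$ are precisely the eigenvalues of $A_1^\trans A_1$, i.e.\ the roots of $g$.

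The first claim is then immediate: since $h$ depends on $(x, u)$ only through $x+u$, we get $h_1(x, u) = h_2(x, u) = g'(x+u)$ everywhere, and in particular at $x = 0$. For the second claim, differentiating once more in each variable gives $h_{12}(x, u) = g''(x+u)$, so
\[
\frac{h_{12}(0, \lambda_i)}{h_1(0, \lambda_i)} = \frac{g''(\lambda_i)}{g'(\lambda_i)}.
\]
Since $g(w) = \prod_{j=1}^\kk (w - \lambda_j)$, the logarithmic derivative identity \eqref{eq:deriv_trick} applied to $g$ at the root $\lambda_i$ yields
\[
\frac{g''(\lambda_i)}{g'(\lambda_i)} = \sum_{j \neq i} \frac{2}{\lambda_i - \lambda_j},
\]
which is exactly the claimed formula.

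There is really no hard step here: the entire content of the lemma is that the constraint $A^\trans A = I_\kk$ causes the three-variable polynomial $p(x, y, z)$ to collapse, after the substitution $y = u-1, z = u$, into a univariate polynomial in $x+u$. Once that collapse is made, both assertions are automatic consequences of chain rule bookkeeping plus \eqref{eq:deriv_trick}. The only thing to be careful about is to verify that $A^\trans A = A_1^\trans A_1 + A_2^\trans A_2$ (immediate from the block form), and that $g$ really is $\kk$-th degree with the $\lambda_i$ as its roots so that \eqref{eq:deriv_trick} applies verbatim.
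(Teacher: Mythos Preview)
Your proof is correct and follows essentially the same approach as the paper: both use the block decomposition and the hypothesis $A^\trans A = I_\kk$ to show $h(x,u) = \mydet{(x+u)I - A_1^\trans A_1}$, then exploit the fact that $h$ depends only on $x+u$ to equate the partial derivatives and reduce to \eqref{eq:deriv_trick}. The only cosmetic difference is that you define $g(w) = \mydet{wI - A_1^\trans A_1}$ as a function of $w = x+u$, whereas the paper sets $g(u) = h(0,u)$; the content is identical.
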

\begin{proof}
Letting
\[
A = \begin{bmatrix}
A_1 \\
A_2
\end{bmatrix}
\begin{array}{c} \ss \\ \tt \end{array}
\]
we have that $A^\trans A = I_\kk$ is equivalent to $A_1^\trans A_1 + A_2^\trans 
A_2 = I$, so plugging these into the generalized characteristic polynomial 
(\ref{eq:xyzcp}), you get
\[
h(x, u) 
= p(x, u-1, u) 
= \mydet{x I + (u-1) A_1^\trans A_1^\trans + u (I - A_1^\trans 
A_1)}
= \mydet{(x + u) I - A_1^\trans A_1}.
\]
Hence derivatives in $x$ and derivatives in $u$ are the same, so (in 
particular) $h_1(0, u) = h_2(0, u)$.
For the same reason, we have
\[
\frac{h_{12}(0, u)}{h_1(0, u)} 
= \frac{h_{22}(0, u)}{h_2(0, u)}
\]
where if we let $g(u) = h(0, u)$, then we can again use (\ref{eq:deriv_trick}) 
to get
\[
\frac{h_{22}(0, \lambda_i)}{h_2(0, \lambda_i)}
= 
\frac{g''(\lambda_i)}{g'(\lambda_i)}
= 
\sum_{j \neq i} \frac{2}{\lambda_i - \lambda_j}
\]
as claimed.
\end{proof}

Hence whenever $A_\theta^\trans A_\theta = I_\kk$, (\ref{eq:prefinal}) reduces 
to
\begin{align*}
\frac{\partial \lambda_i}{\partial \theta} 
&= 
- (\ss-\kk + 1)(\lambda_i-1) 
- (\tt-\kk + 1) \lambda_i 
- \sum_{j \neq i} \frac{2 \lambda_i (\lambda_i - 1)}{\lambda_i - \lambda_j} 
\\&= 
- \ss(\lambda_i-1) 
- \tt \lambda_i 
+ \sum_{j \neq i} \frac{\lambda_i(1 - \lambda_j) +  \lambda_j (1 - 
\lambda_i)}{\lambda_i - \lambda_j} 
\end{align*}
matching the drift term in (\ref{eq:jacobi_SDE}) exactly.  
To end the section, we find it worth mentioning that the GSVD of a unitary 
matrix is a decomposition that is well-studied in its own right (known as the 
CS-decomposition) \cite{golub}.
This fact was noted in \cite{edelmanCS} as well, however only in the context of 
the Jacobi ensemble itself (not the process).
Hence this might be the more appropriate ``decomposition'' to associate 
directly to the matrix Jacobi process and, if this is the case, then it 
suggests that a random matrix process related to the true GSVD has yet to be 
considered in full generality.

\section{Simulation}\label{sec:simulation}

The purpose of this section is 3-fold.  
Firstly, we hope to convince the reader of the assertion made in 
Section~\ref{sec:gsvdbm} that the point process described by \textbf{4.} is 
fundamentally different than the point processes defined by \textbf{2.} and 
\textbf{3.} in the respect that future point configurations are not determined 
by the current point configuration alone.  

Secondly, we hope to explore the degree to which the $k$th root of the expected 
characteristic polynomial correspond to the expected value of the $k$th root.
The use of free probability as a tool for estimating random matrix statistics 
has a long history, hence there is hope that finite free probability has 
similar potential.
The obvious upside of finite free probability is that it holds in fixed 
dimensions, and so one might hope that in situations where a fixed dimensions 
ensemble has features that do not remain in large dimensions that finite free 
probability could give better results.
The obvious downside is that, unlike in the case of free probability, it is 
unclear how well the statistics that one is able to compute (the expected 
characteristic polynomials) relate to the statistics one might want to compute 
(moments or probabilities).
Thus we hope to convey some indication as to what the potential is in this 
respect.

%This is particularly relevant given the comment made at the end of 
%Section~\ref{gsvd}: that, in fact, one can devise any number of ways to find 
%an 
%expected characteristic polynomial using various weightings on the polynomials 
%themselves.
%The choice made in this paper of the particular weightings to use were made 
%primarily of necessity (it was the only one we could actually compute), but it 
%would be nice to have 

Thirdly, we wish to explore the recurring pattern in this paper of the point 
process defined by the polynomial convolution matching the ``non-random'' or 
``as $\beta \to \infty$'' part of some random matrix process. 
While it is only a limited set, we wish to examine the $\beta = 1$ cases and 
$\beta = 2$ cases to see whether we can find support for such a claim 
experimentally and also to see whether there is some further intuition we can 
gain in this regard (like the appearance of monotonicity, for example).

\subsection{The experiment}

We first describe the experiment.
We set $\kk=4, \ss = 5, \tt = 10$, and use the following $\kk \times \kk$ 
diagonal matrices:\footnote{One would not be wrong to consider the choice of 
matrices $H$ and $K$ to be somewhat extreme and not representative of a typical 
application. 
One can show that even small differences between $H$ and $K$ result in 
different behaviors, but ``how different'' is unclear.
The choice of these particular matrices was made with the hopes of making 
various features visually recognizable.}
\[
D_1 =
\begin{bmatrix}
2 & & & \\
& 2 & & \\
& & 5 & \\
& & & 5
\end{bmatrix}
\quad
D_2 =
\begin{bmatrix}
10 & & & \\
& 10 & & \\
& & 10 & \\
& & & 10
\end{bmatrix}
\quad
H =
\begin{bmatrix}
1 & & & \\
& 1 & & \\
& & 1 & \\
& & & 500
\end{bmatrix}
\quad
K =
\begin{bmatrix}
500 & & & \\
& 1 & & \\
& & 1 & \\
& & & 1
\end{bmatrix}
\]
We add $(\ss-\kk)$ rows of $0$ to $D_1$ to form $F_1 \in \MM_{\ss, \kk}$ (and 
similarly to form $F_2 \in \MM_{\tt, \kk}$ from $D_2$).
We then pick constants $\delta, \epsilon$ and form
\[
A = \begin{bmatrix}
A_1 \\
A_2
\end{bmatrix}
= \delta
\begin{bmatrix}
F_1 H \\
F_2 H
\end{bmatrix}
\AND
B = \begin{bmatrix}
B_1 \\
B_2
\end{bmatrix}
=
\epsilon
\begin{bmatrix}
F_1 K \\
F_2 K
\end{bmatrix}.
\]
with $\delta, \epsilon$ chosen so that $\mytr{A^\trans A} = 
\mytr{B^\trans B} = 1$. 

Using (\ref{eq:W}), one can easily check that multiplying on the right by an 
invertible matrix does not change the eigenvalues of $C^\trans C$ and so the 
squares 
of the generalized singular values of $A$ and $B$ will be the same:

\[
\left( \frac{4}{13}, \frac{4}{13}, \frac{25}{34}, 
\frac{25}{34}\right)  \approx ( 0.308, 0.308, 0.735, 0.735 ).
\]

For each trial, we consider two independent point processes: one starting at 
$A^0 = A$ and the other at $B^0 = B$.  
At time step $i$, we generate a random matrix 
$Z_i \in \MM_{(\ss + \tt), \kk}$ with independent normal entries ($\mu = 0, 
\sigma^2 = 10^{-8}$) to add 
to both $A$ and $B$\footnote{The reason for adding the same matrix is to 
ensure that any differences that will appear are not a result of random effects.
However numerous tests were done using independent matrices and each gave 
similar results.}:
\[
A^i \leftarrow A^{i-1} + Z_i
\AND
B^i \leftarrow A^{i-1} + Z_i.
\]
We calculate the squares of the generalized singular values $c_{A^i}^2, 
c_{B^i}^2 \in \R^\kk$ 
(both in increasing order).
We then average over $500$ trials, obtaining the (average) squared generalized 
singular values $\overline{c}_{A^i}^2, 
\overline{c}_{B^i}^2 \in 
\R^\kk$.
These will be compared to the roots of the expected generalized singular value 
polynomials 
\[
\tilde{p}_{A^i}(x, y, z)
= 
Q^{(i \sigma^2)}_{\ss, \tt} \applyto{ p_A(x, y, z) }
\AND
\tilde{p}_{B^i}(x, y, z)
= 
Q^{(i \sigma^2)}_{\ss, \tt} \applyto{ p_B(x, y, z) }.
\]
To simplify the wording slightly, rather than writing (for example) ``notice 
that $\tilde{p}_{A^i}$ (respectively $\tilde{p}_{B^i}$) are'', we will write 
simply 
``notice that 
the $\tilde{p}_{M^i}$ are''
(essentially anywhere there is an $M$, it should be taken as being a statement 
for both $A$ and $B$, separately).

\subsection{Results}

%We start by comparing the roots of the (experimental) expected polynomials  
%with the (theoretical) expected polynomials when each is evaluated at $(0, 
%w-1, 
%w)$.
%The results for the first $100$ steps are shown in Figure~\ref{fig:stupid}.
%
%\begin{figure}[!h]
%    \centering
%    \begin{minipage}{.25\textwidth}
%        \centering
%        \includegraphics[width=1\linewidth, 
%        height=0.15\textheight]{stupid1}
%$\overline{p}_{A^i}$ vs $\tilde{p}_{A^i}$
%    \end{minipage}%
%\qquad
%    \begin{minipage}{0.25\textwidth}
%        \centering
%        \includegraphics[width=\linewidth, 
%        height=0.15\textheight]{stupid2}
%$\overline{p}_{B^i}$ vs $\tilde{p}_{B^i}$
%    \end{minipage}
%
%
%\caption{First 100 steps of experimental expected polynomials vs true 
%expected polynomials. The plotted points are the roots of each polynomial 
%evaluated at $(0, w-1, w)$. The $\tilde{p}_{M^i}$ are in black.}
%\label{fig:stupid}
%\end{figure}
%
%These results should not be surprising; they are more or less affirming that 
%the polynomials $\tilde{p}_{M^i}$ are actually the correct ones (via the Law 
%of Large 
%numbers).
%The interesting question is the extent to which the expected characteristic 
%polynomials accurately represent the expected generalized singular values.
%In fact, the only reason for mentioning Figure~\ref{fig:stupid} is that (for 
%computational reasons) we will stick to using the $\tilde{p}_{M^i}$ 
%polynomials in the 
%remaining figures and the reader can be assured that they are accurate 
%representation of the $\overline{p}_{M^i}$ polynomials.

We first hope to convey the fact that the evolution of $A$ and $B$ --- despite 
starting at the same generalized singular values and using the same ``random'' 
matrices --- are fundamentally different.
%Certainly they are different to some extent, as can be seen directly in 
%Figure~\ref{fig:stupid}.  
%However the overall shapes of the evolution in Figure~\ref{fig:stupid} are 
%still quite similar, and so one might suspect that the difference could be 
%compensated for by adjusting the variance of the process.
For this purpose we can examine the evolution of the point process over various 
time scales (Figure~\ref{fig:scales}).
There are, of course, similarities, it is easy to see that both $A$ and $B$  
exhibit repulsion between the paths early on in addition to converging to the 
roots of a Jacobi polynomial in the long term (both of which should be 
expected).
However $B$ seems to converge very quickly to the asymptotic limit 
whereas $A$ takes a (very) long time.
Of particular interest is the path of the largest value (in red); the path of 
$A$ 
actually moves {\em away} from the eventual asymptotic limit for some time 
during the middle range.
This is in direct contradiction to the other paths of $A$ and all of the 
paths of $B$ which seem to move in the direction of the asymptotic 
limit.

\begin{figure}[!h]
    \centering
    \begin{minipage}{.25\textwidth}
        \centering
        \includegraphics[width=1\linewidth, 
        height=0.15\textheight]{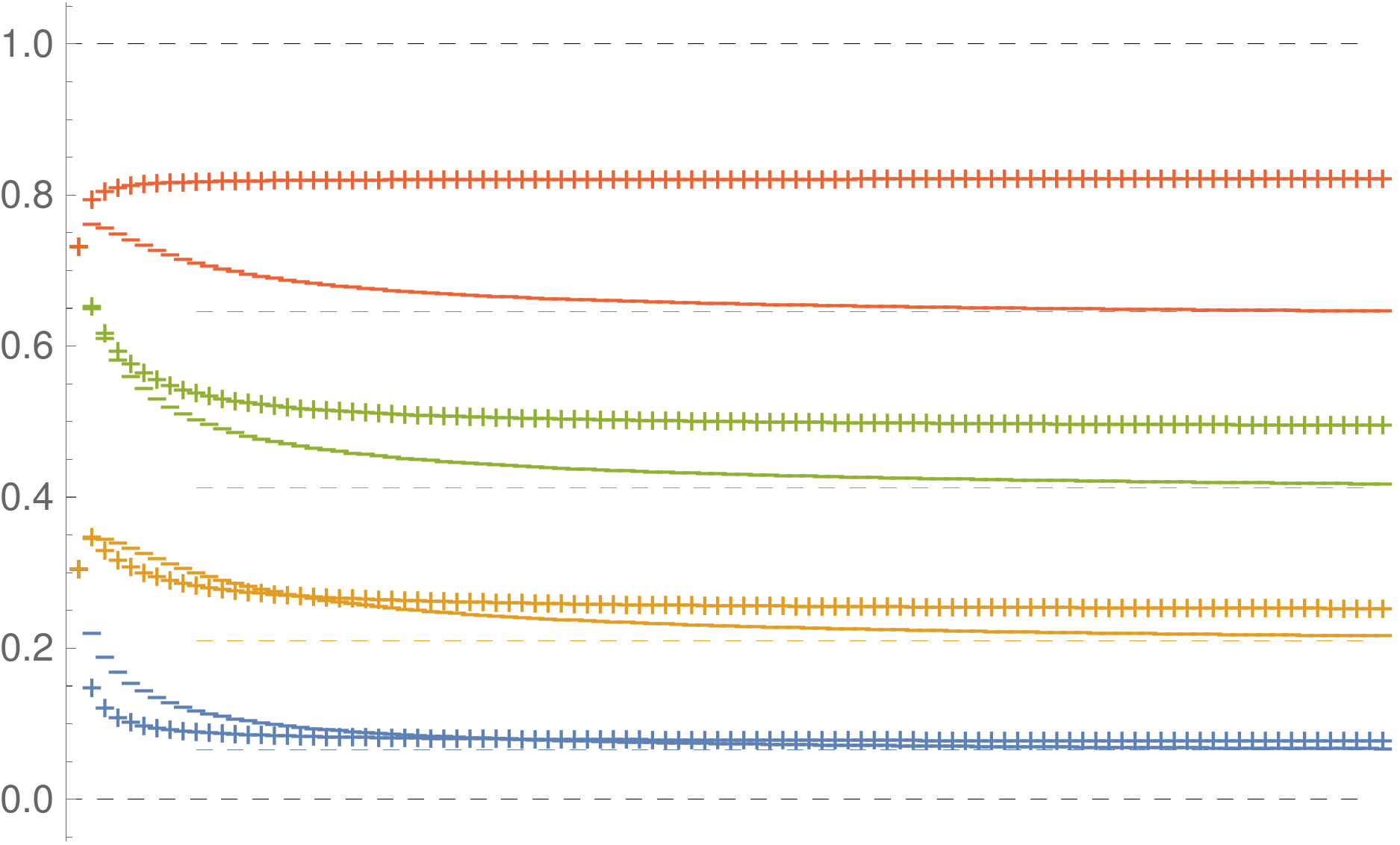}
First $10^3$ steps.
    \end{minipage}%
\qquad
    \begin{minipage}{0.25\textwidth}
        \centering
        \includegraphics[width=\linewidth, 
        height=0.15\textheight]{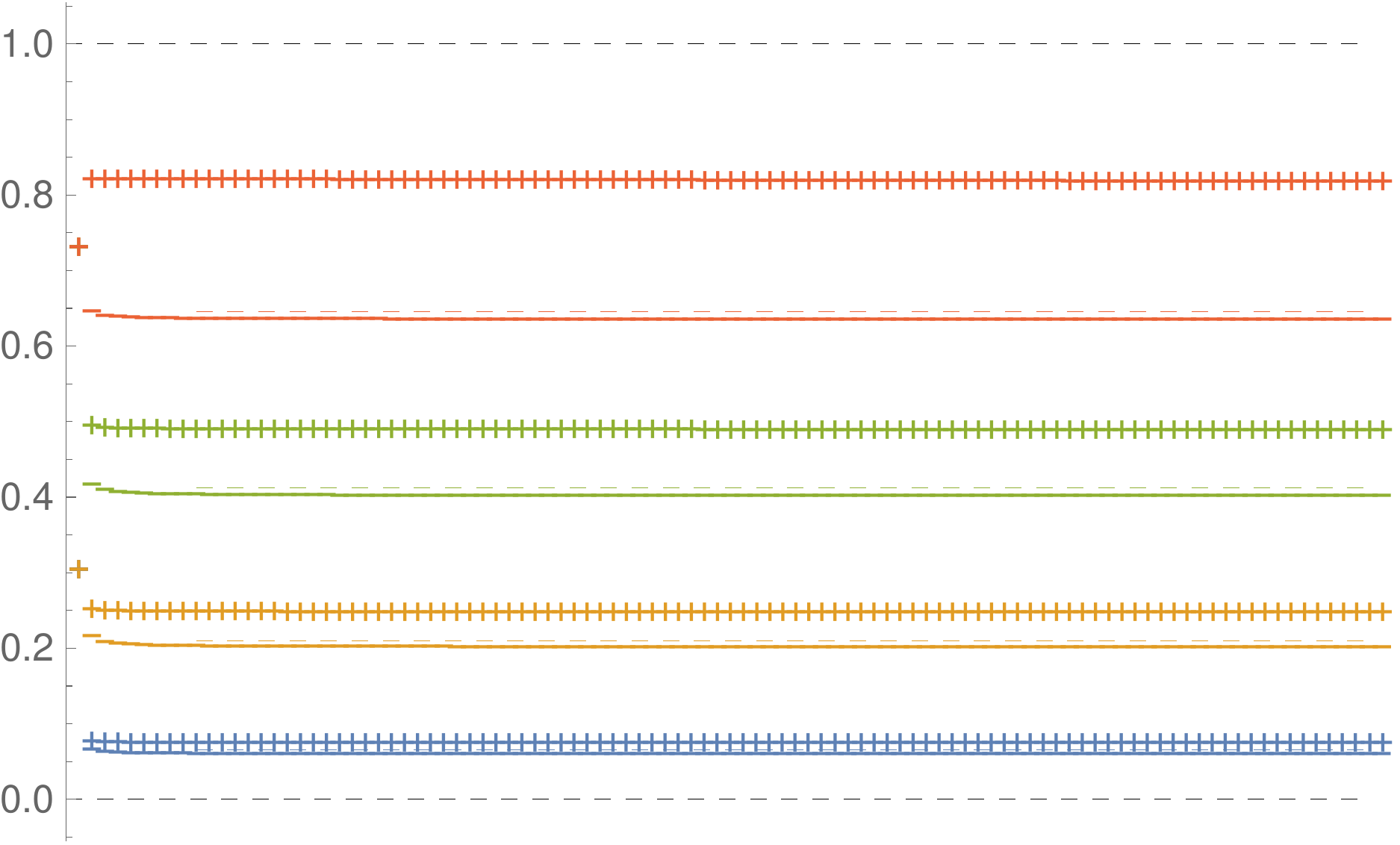}
First $10^5$ steps.
    \end{minipage}
\qquad
    \begin{minipage}{0.25\textwidth}
        \centering
        \includegraphics[width=\linewidth, 
        height=0.15\textheight]{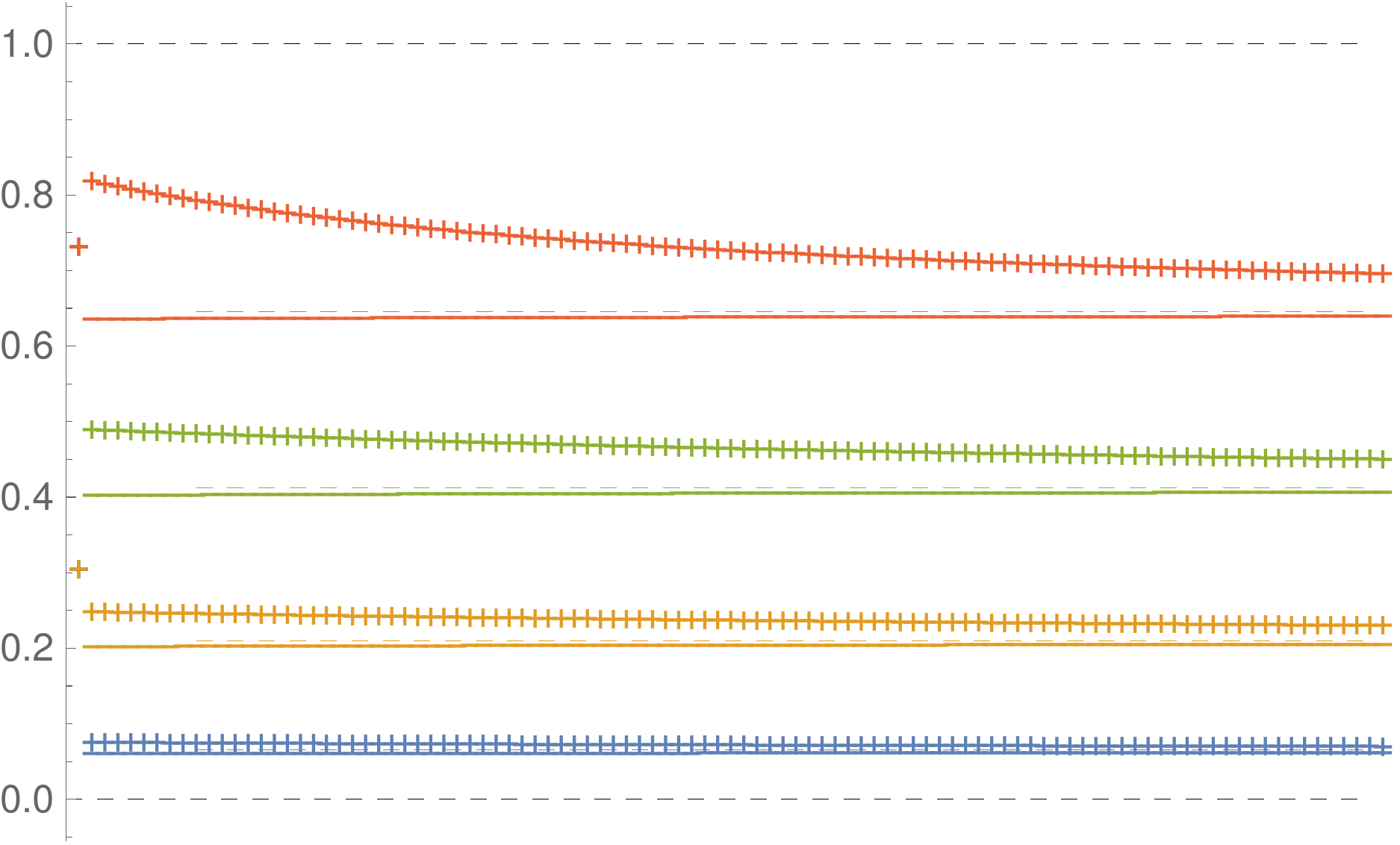}
First $10^7$ steps.
    \end{minipage}

\caption{Evolution of roots of $\tilde{p}_{A^i}$ (+) vs $\tilde{p}_{B^i}$ (-) 
vs asymptotes 
(dashed lines).}\label{fig:scales}
\end{figure}

For the second goal, we will focus on the part of the process where 
the most action happens.
Figure~\ref{fig:match} shows the first 100 steps of the process for each 
matrix 
process (in colors) as well as the paths taken by the roots of the 
$\tilde{p}_{M^i}$, 
in black).
The third plot shows the the two processes together.
Figure~\ref{fig:match} suggests the $\tilde{p}_{M^i}$ are capturing the 
general behavior of each $\overline{c}^2_{M^i}$ quite well. 
Furthermore, the distance between the two seems to be rather consistent.  
This is corroborated by Figure~\ref{fig:dist}, which shows the actual 
distributions of $c^2_{A^i}$ normalized so that the root of $\tilde{p}_{A^i}$ 
is at the 
center.

\begin{figure}[!h]
    \centering

    \begin{minipage}{0.25\textwidth}
        \centering
        \includegraphics[width=\linewidth, 
        height=0.15\textheight]{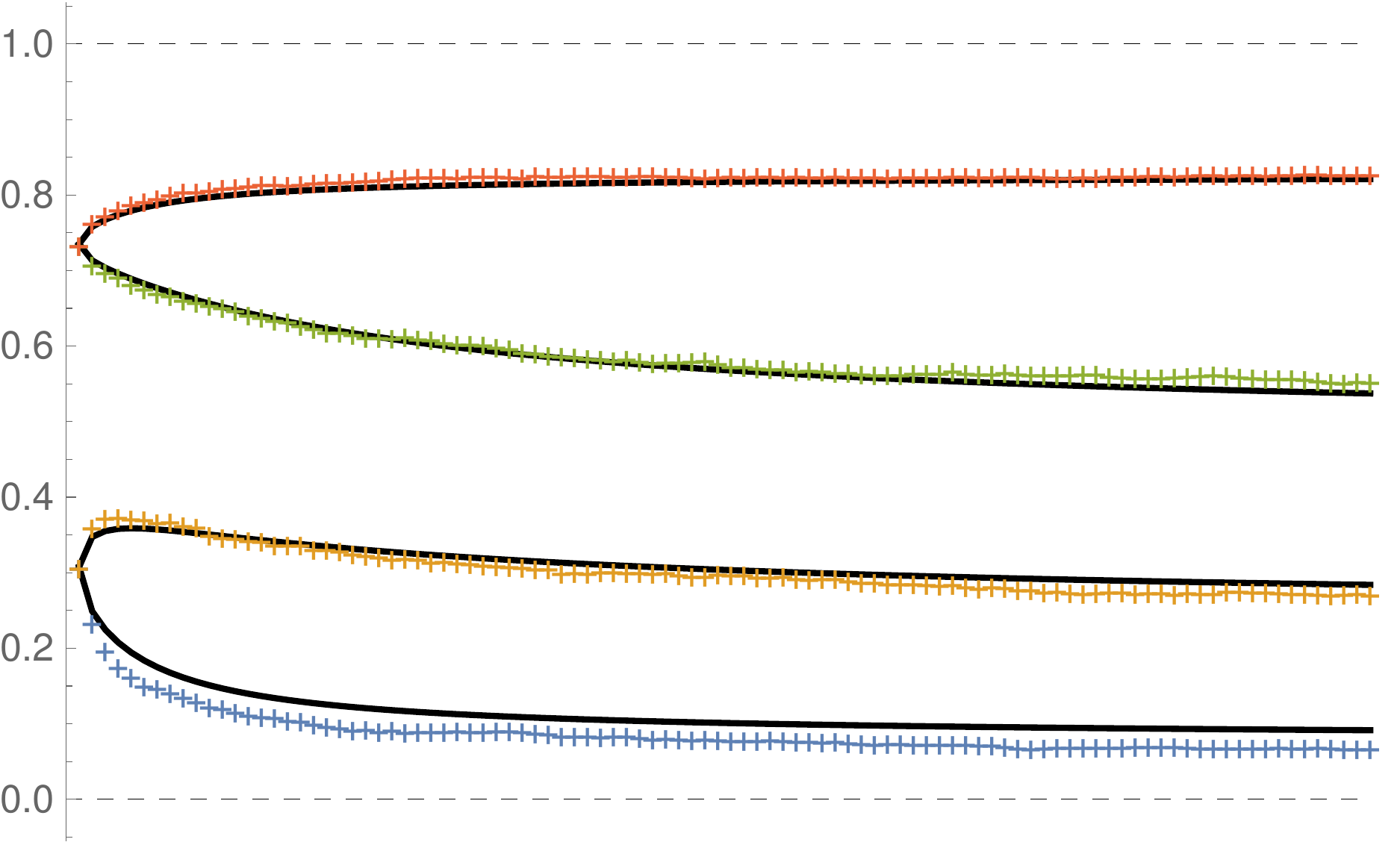}
$\overline{c}^2_{A^i}$ vs $\tilde{p}_{A^i}$
    \end{minipage}
\qquad
    \begin{minipage}{0.25\textwidth}
        \centering
        \includegraphics[width=\linewidth, 
        height=0.15\textheight]{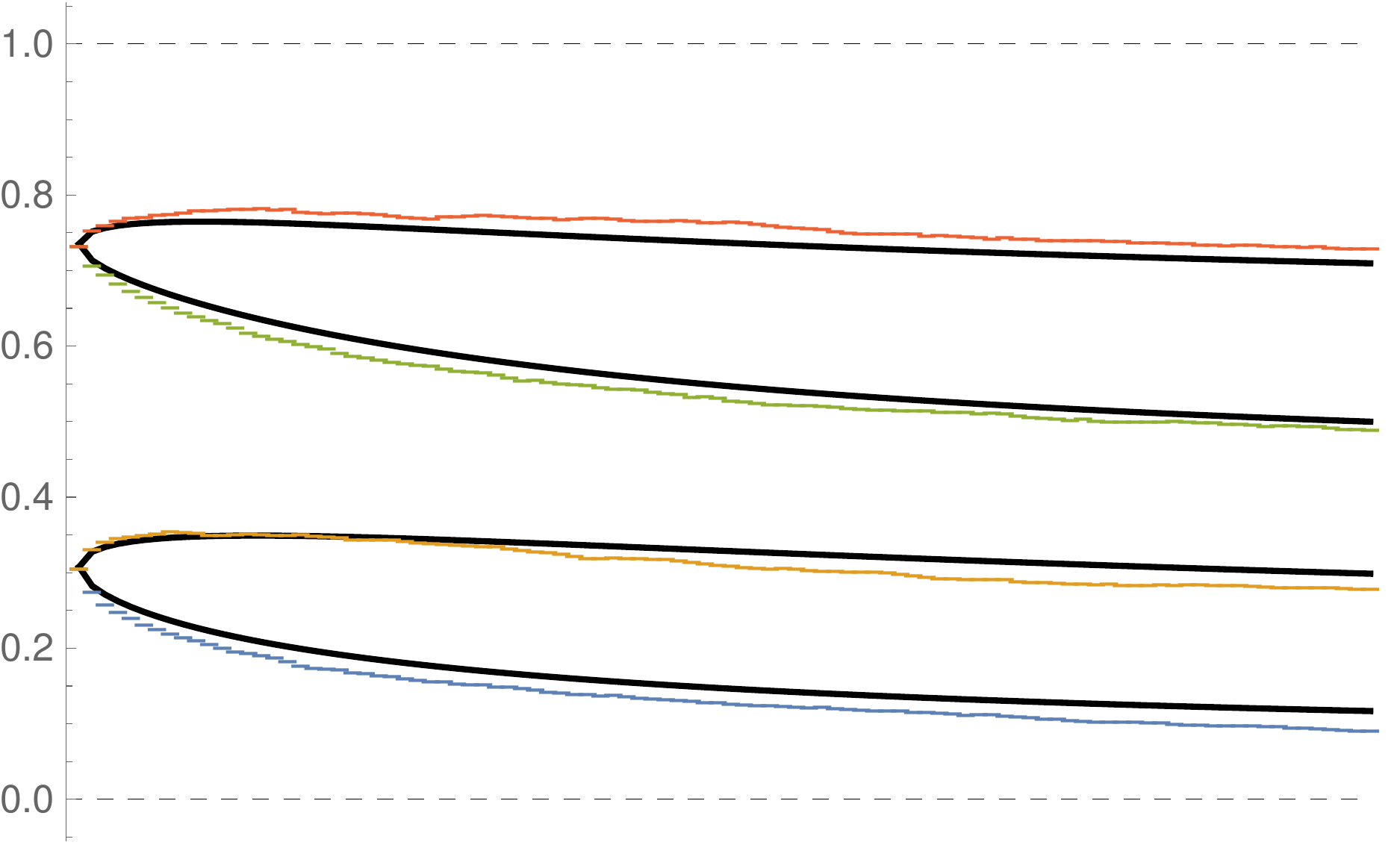}
$\overline{c}^2_{B^i}$ vs $\tilde{p}_{B^i}$
    \end{minipage}
\qquad
    \begin{minipage}{0.25\textwidth}
        \centering
        \includegraphics[width=\linewidth, 
        height=0.15\textheight]{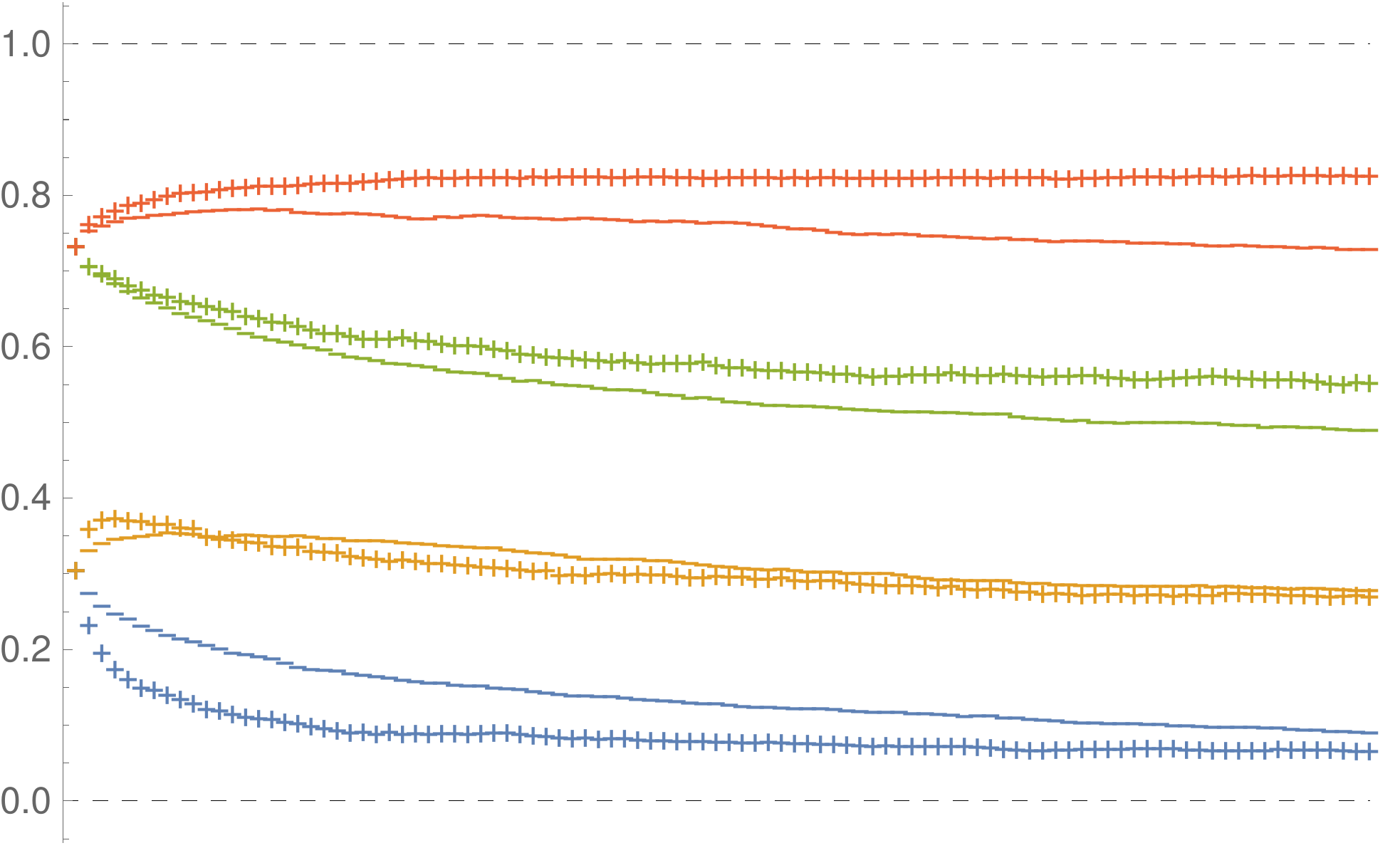}
$\overline{c}^2_{A^i}$ (+) vs $\overline{c}_{B^i}$ (-)
    \end{minipage}

 \caption{Experimental values (colors) versus polynomial 
 predictions (black) for the first 100 steps. }
\label{fig:match}
\end{figure}

\begin{figure}[!h]
    \centering

    \begin{minipage}{0.2\textwidth}
        \centering
        \includegraphics[width=\linewidth, 
        height=0.1\textheight]{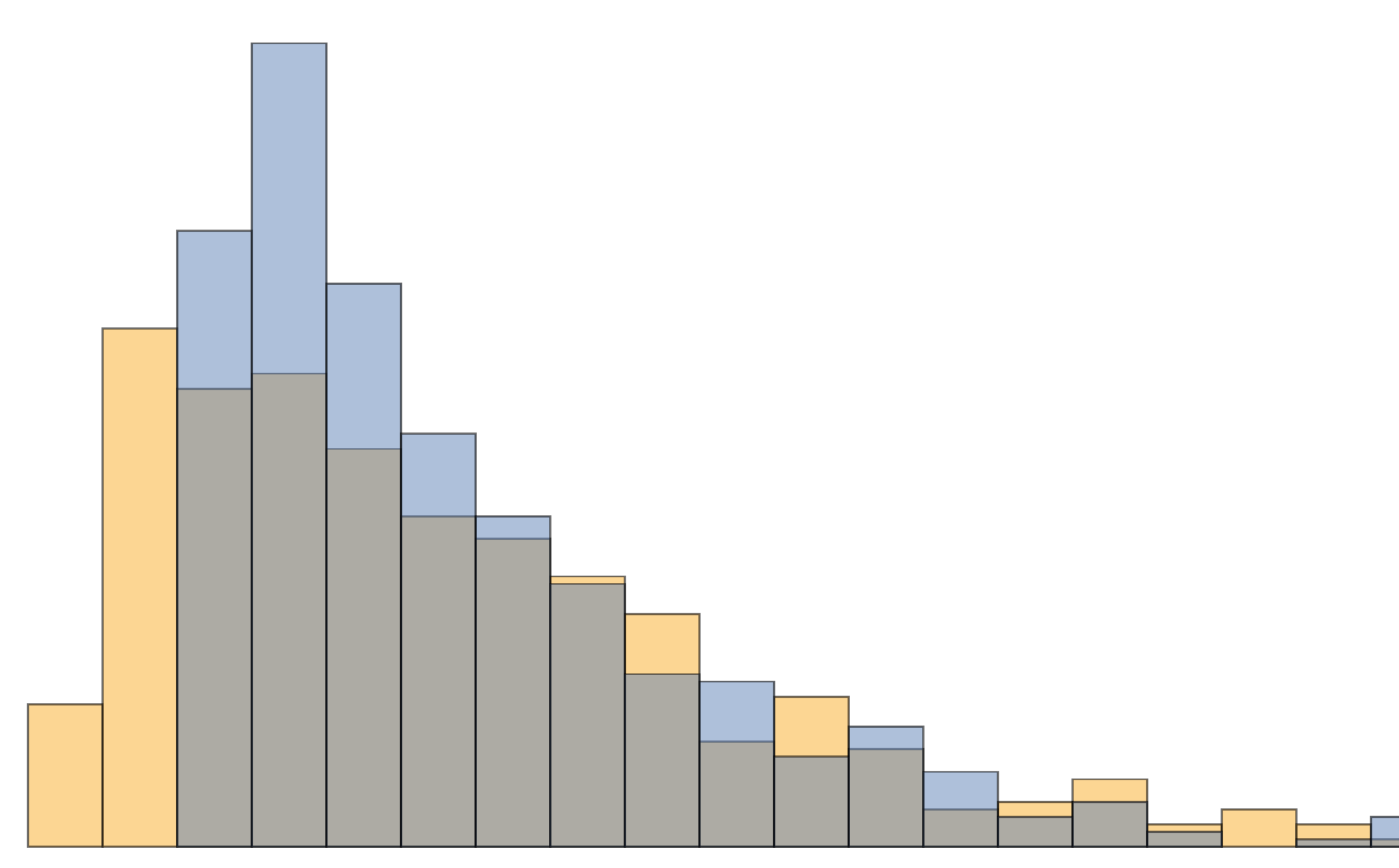}
blue (smallest)
    \end{minipage}
\qquad
   \begin{minipage}{0.2\textwidth}
        \centering
        \includegraphics[width=\linewidth, 
        height=0.1\textheight]{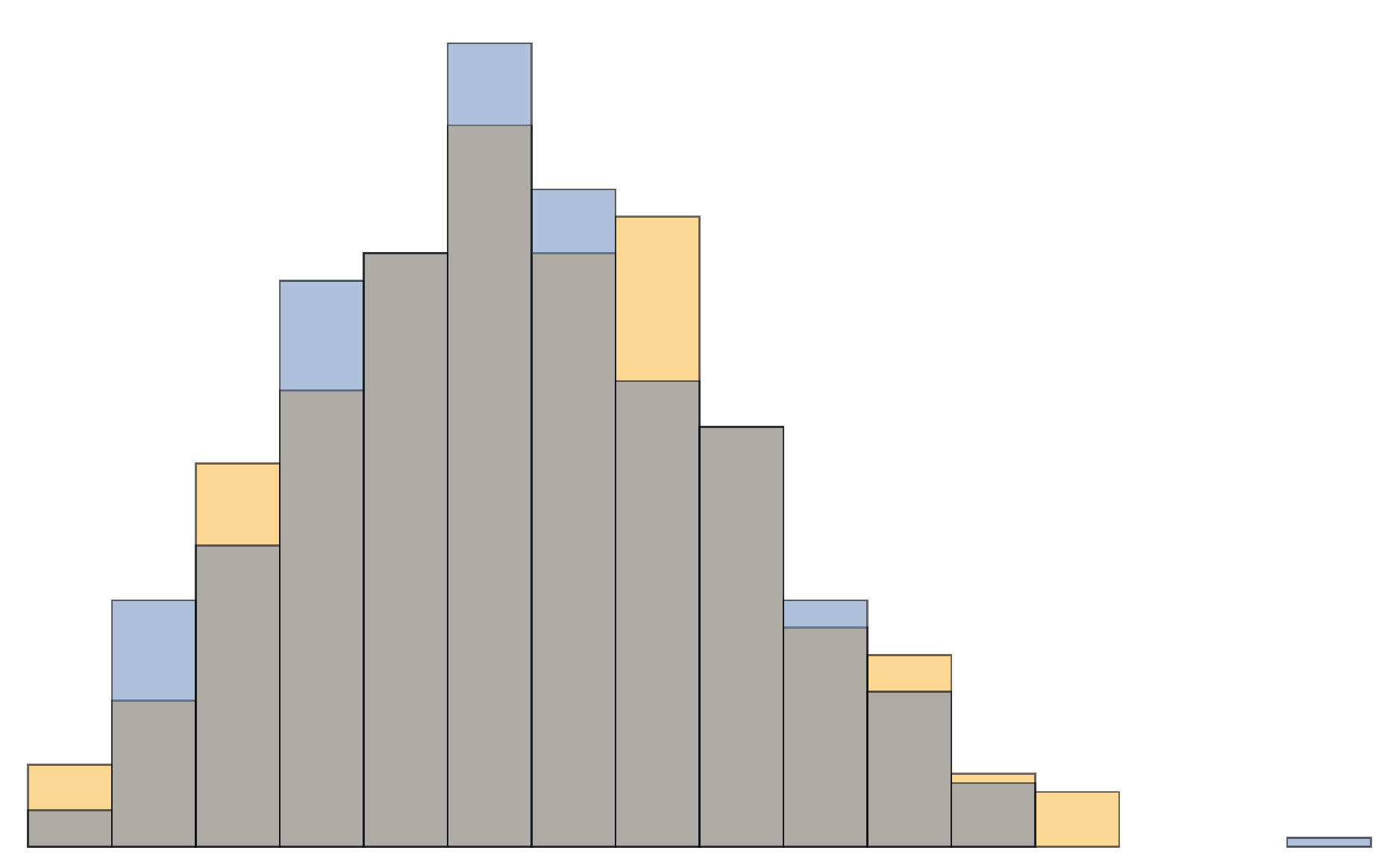}
yellow
    \end{minipage}
\quad
   \begin{minipage}{0.2\textwidth}
        \centering
        \includegraphics[width=\linewidth, 
        height=0.1\textheight]{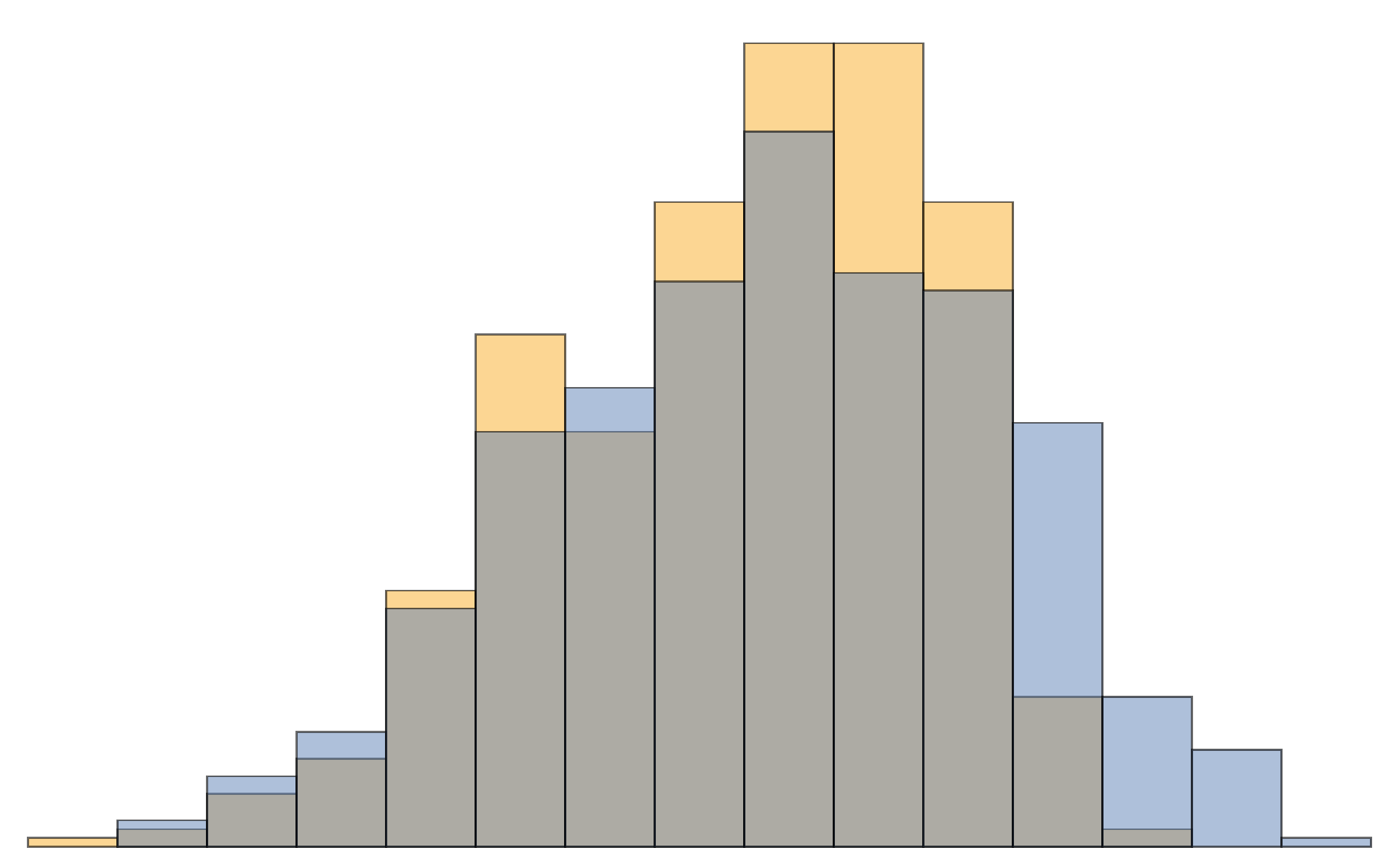} 
green
\end{minipage}
\quad
   \begin{minipage}{0.2\textwidth}
        \centering
        \includegraphics[width=\linewidth, 
        height=0.1\textheight]{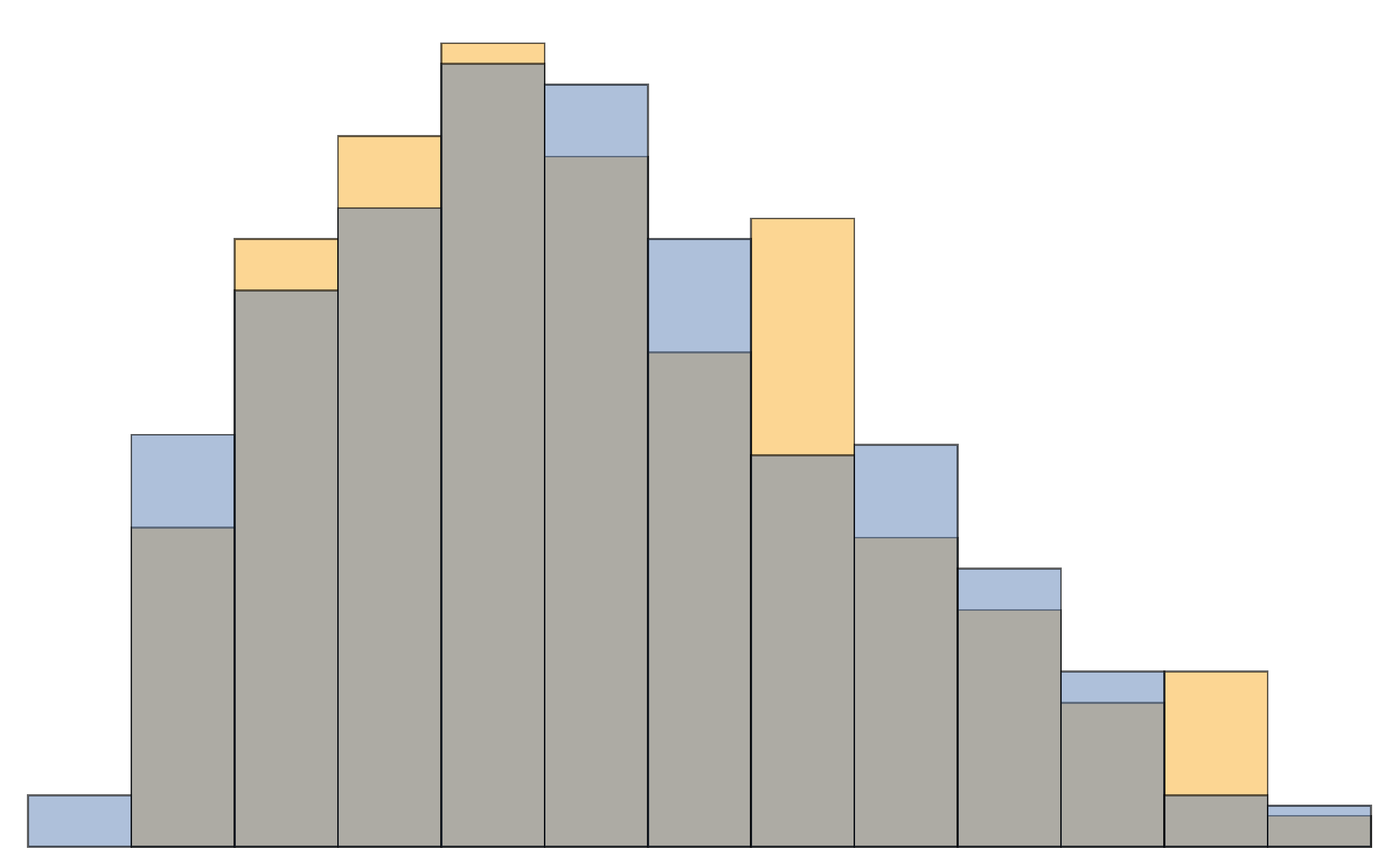}
red (largest)
    \end{minipage}
\caption{The distributions of $c^2_{A^i}$ shifted so that the 
centers are the roots of $\tilde{p}_{A^i}$ (for $i=20, 100$).}
\label{fig:dist}
\end{figure}
%. (a plot of this is given in Figure~\ref{fig:polyt1} (b), (c)).
%While not particularly well behaved, they seem consistent enough that one 
%could imagine being able to compute increasingly good corrections.
%Furthermore, the actual distributions seem to 
%
%
%\begin{figure}[!h]
%    \centering
%
%    \begin{minipage}{0.35\textwidth}
%        \centering
%        \includegraphics[width=\linewidth, 
%        height=0.15\textheight]{polys}
%(a) $q_X^{100}(0, w-1, w)$
%    \end{minipage}
%\qquad
%   \begin{minipage}{0.25\textwidth}
%        \centering
%        \includegraphics[width=\linewidth, 
%        height=0.15\textheight]{errorA}
%(b) $\overline{c}_{A^i}^2 - \tilde{p}_{A^i}$
%    \end{minipage}
%\qquad
%   \begin{minipage}{0.25\textwidth}
%        \centering
%        \includegraphics[width=\linewidth, 
%        height=0.15\textheight]{errorB}
%(c) $\overline{c}_{B^i}^2 - \tilde{p}_{B^i}$
%    \end{minipage}
%
%\caption{ (a) The area between $q_A^{100}$ and the $x$-axis is clearly much 
%larger between the largest $2$ roots than any of the others. (b), (c) The 
%errors appear to be somewhat well behaved.}
%\label{fig:polyt1}
%\end{figure}

To address the third goal we will find it more useful to examine the moments 
induced by the $\overline{c}^2_{M^i}$ instead of the points themselves.
Figure~\ref{fig:moments1} and Figure~\ref{fig:moments2} show the relationship 
between the moments generated by the $\overline{c}_{M^i}$ (in yellow) and the 
moments generated by the roots of the $\tilde{p}_{M^i}$ (in blue).
An additional data point (in green) consists of the moments of the 
$\overline{c}^2_{M^i}$ when we change the simulation slightly to use {\em 
complex} Gaussians when forming the random $Z_i$ (all transposes become 
conjugate transposes as well).
Note that these are the moments of $\overline{c}_{M^i}$ and not 
$\overline{c}^2_{M^i}$, so the first image is (in some sense) the $1/2$ moment 
of $\overline{c}^2_{M^i}$.
We found this view to be more compelling as it shows the concave functions 
monotone increasing in $\beta$ and the convex ones monotone decreasing (signs 
of possible majorization)\footnote{One might also notice in  
Figure~\ref{fig:match} that the errors in the largest value (red) are 
consistently positive, whereas the errors in the smallest value (blue) are 
negative (also signs of possible majorization).}.
We will discuss conjectures related to this relationship in 
Section~\ref{sec:conclusion}.

\begin{figure}[!h]
    \centering
    \begin{minipage}{0.2\textwidth}
        \centering
        \includegraphics[width=\linewidth, 
        height=0.1\textheight]{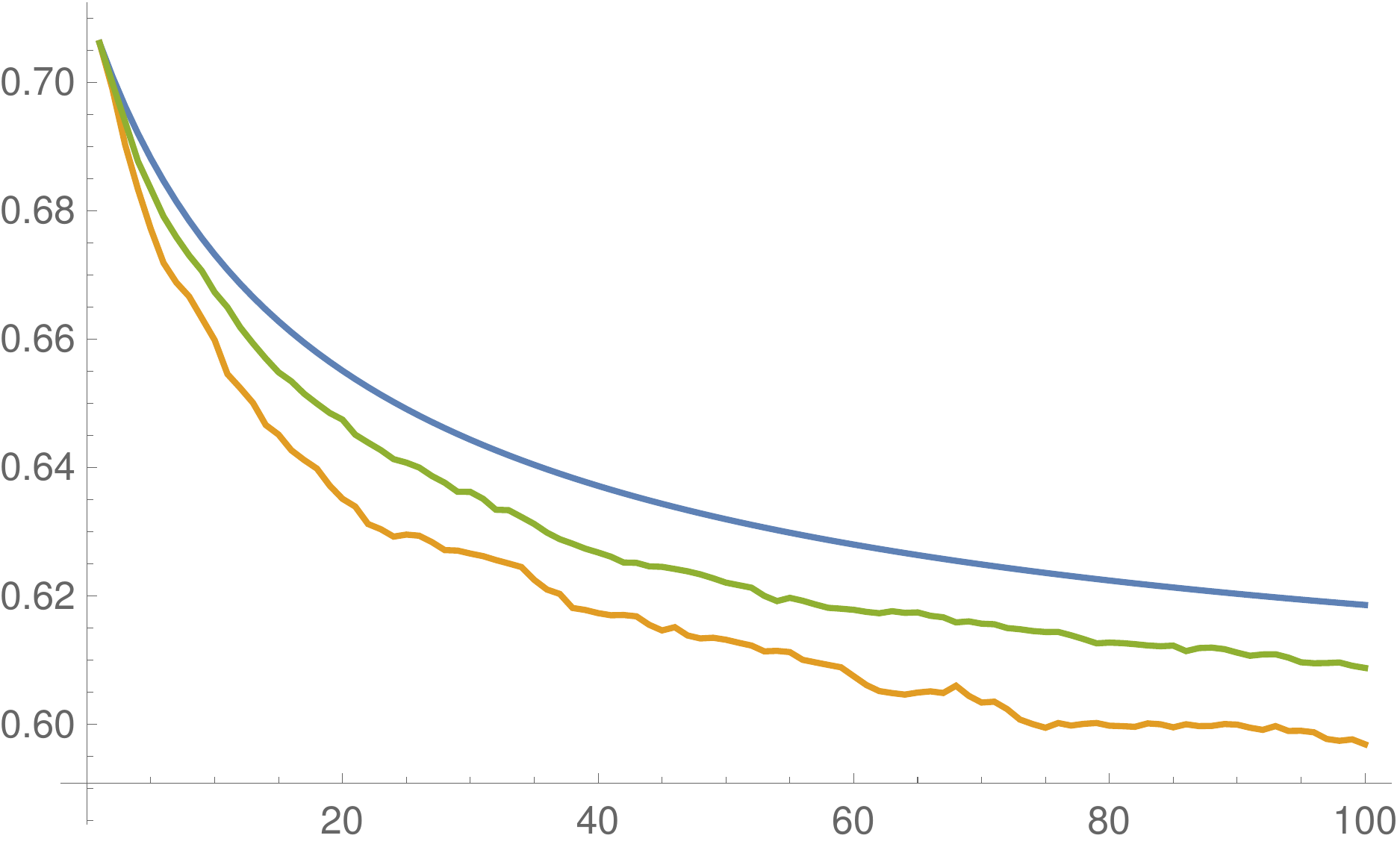}
    \end{minipage}
\qquad
   \begin{minipage}{0.2\textwidth}
        \centering
        \includegraphics[width=\linewidth, 
        height=0.1\textheight]{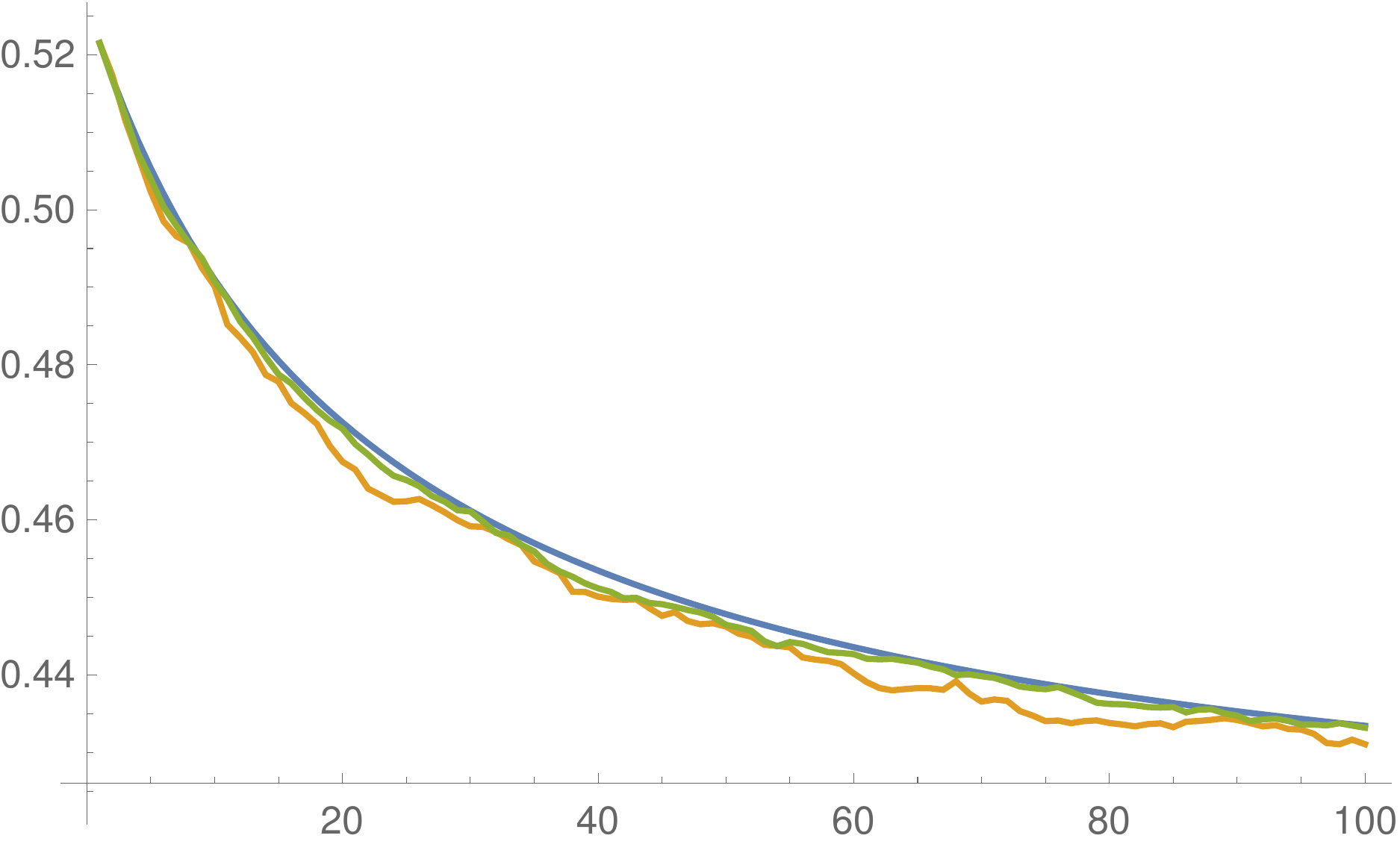}
    \end{minipage}
\qquad
  \begin{minipage}{0.2\textwidth}
        \centering
        \includegraphics[width=\linewidth, 
        height=0.1\textheight]{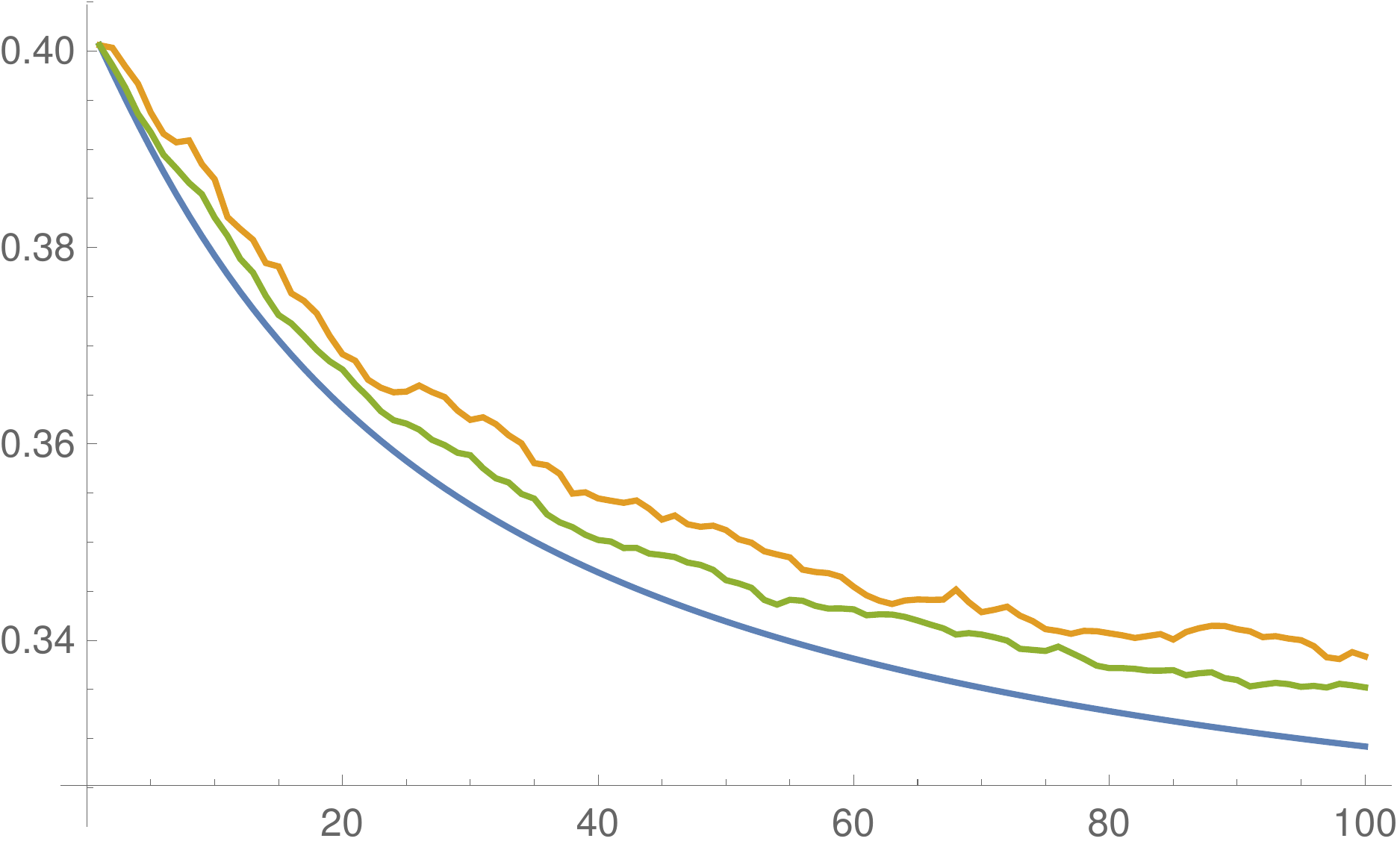}
    \end{minipage}
\qquad
   \begin{minipage}{0.2\textwidth}
        \centering
        \includegraphics[width=\linewidth, 
        height=0.1\textheight]{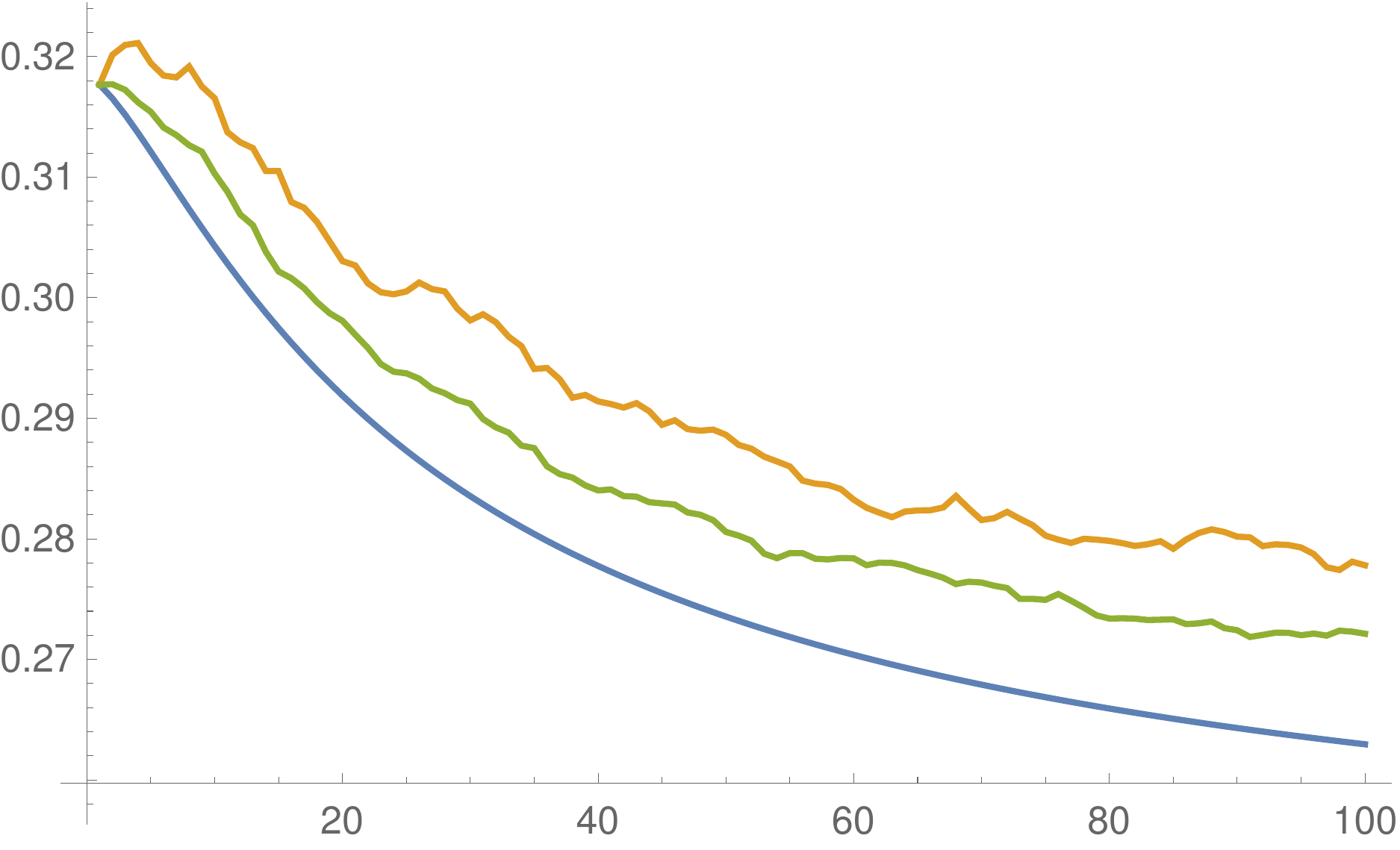}
    \end{minipage}
\caption{ The first four moments derived from $\overline{c}_{A^i}$ for $Z_i$ 
real valued (yellow) and complex valued (green).
The moments derived from the roots of $\tilde{p}_{A^i}(0, w-1, w)$ are in blue.}
\label{fig:moments1}
\end{figure}

\begin{figure}[!h]
    \centering
    \begin{minipage}{0.2\textwidth}
        \centering
        \includegraphics[width=\linewidth, 
        height=0.1\textheight]{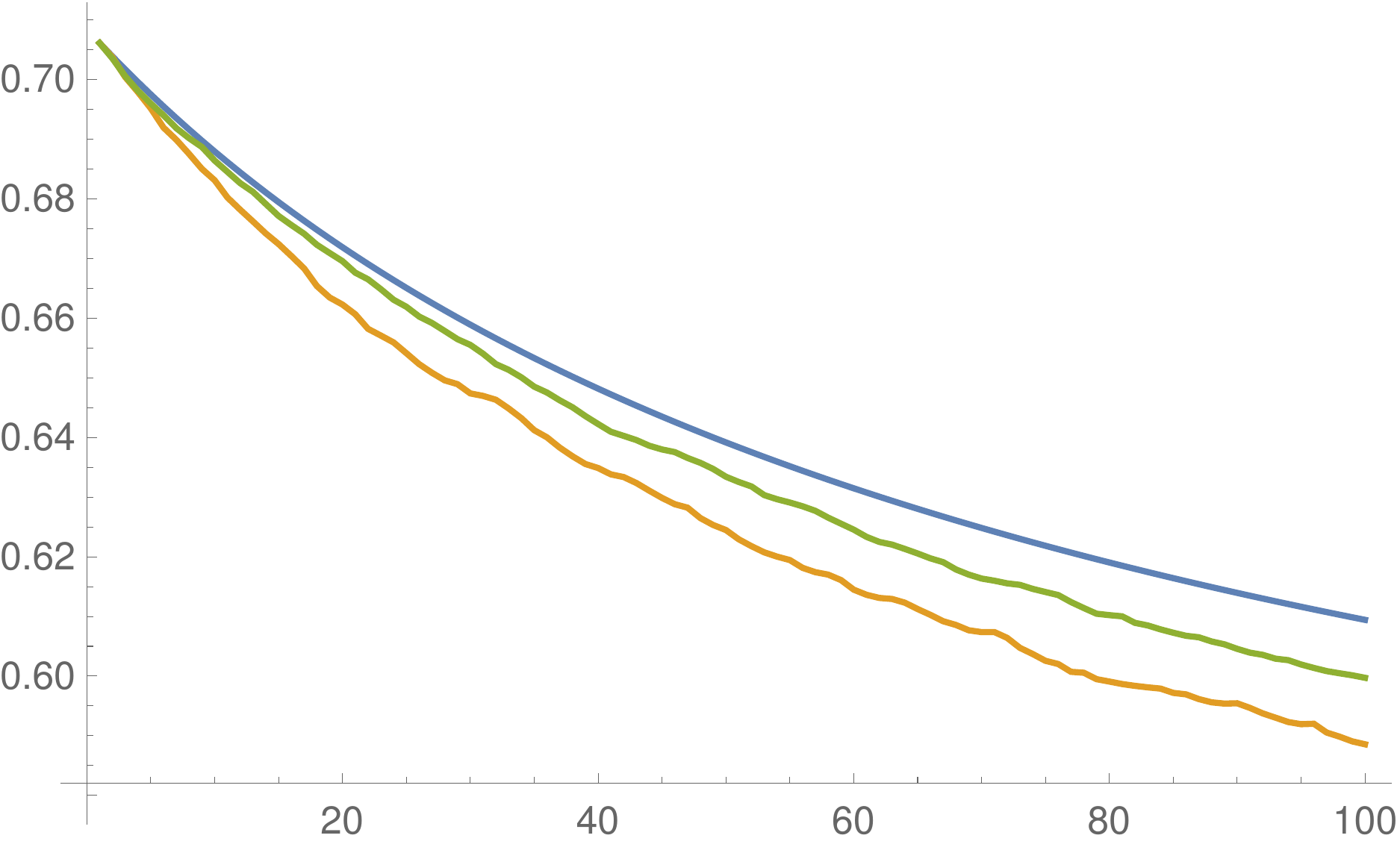}
    \end{minipage}
\qquad
   \begin{minipage}{0.2\textwidth}
        \centering
        \includegraphics[width=\linewidth, 
        height=0.1\textheight]{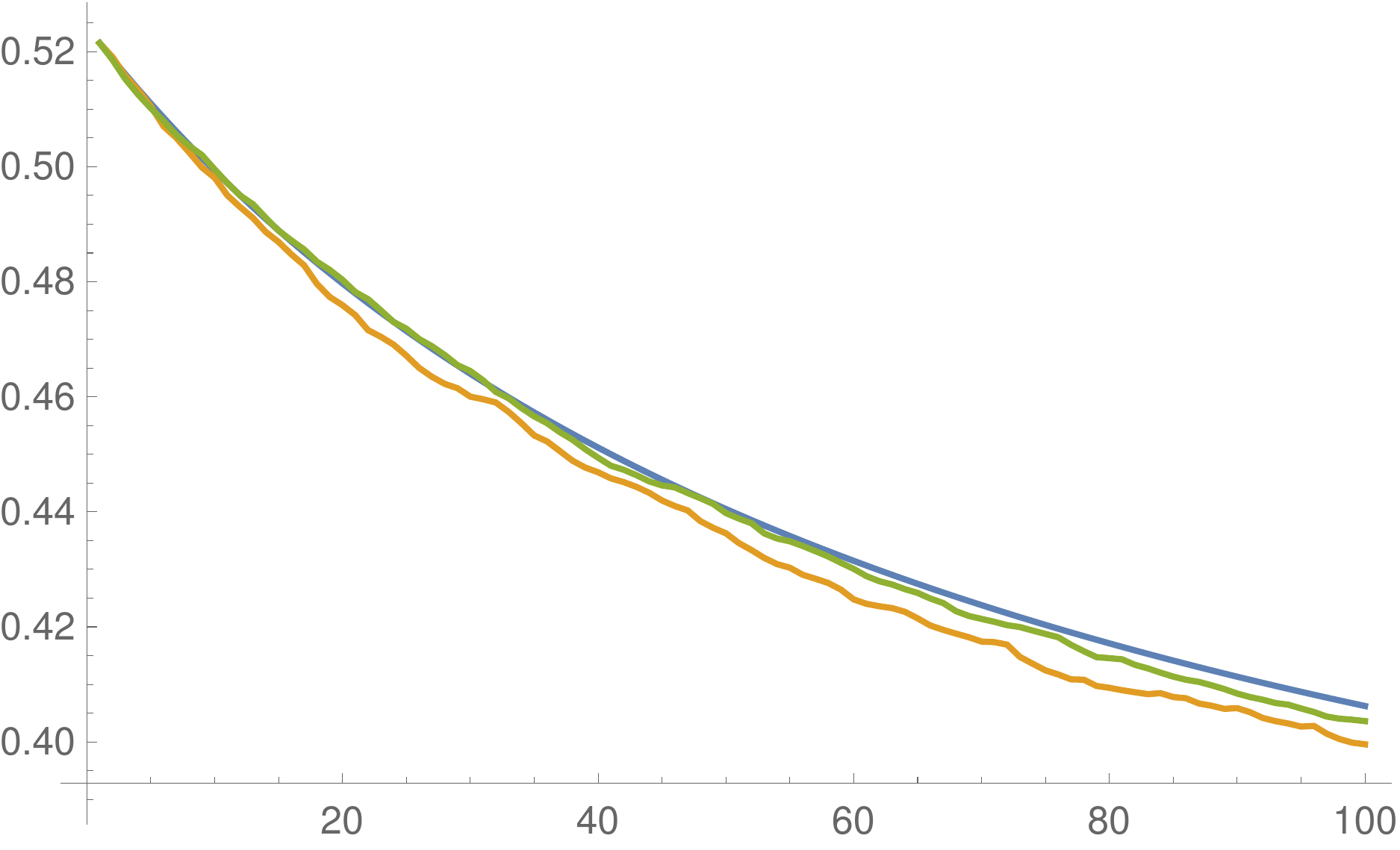}
    \end{minipage}
\qquad
  \begin{minipage}{0.2\textwidth}
        \centering
        \includegraphics[width=\linewidth, 
        height=0.1\textheight]{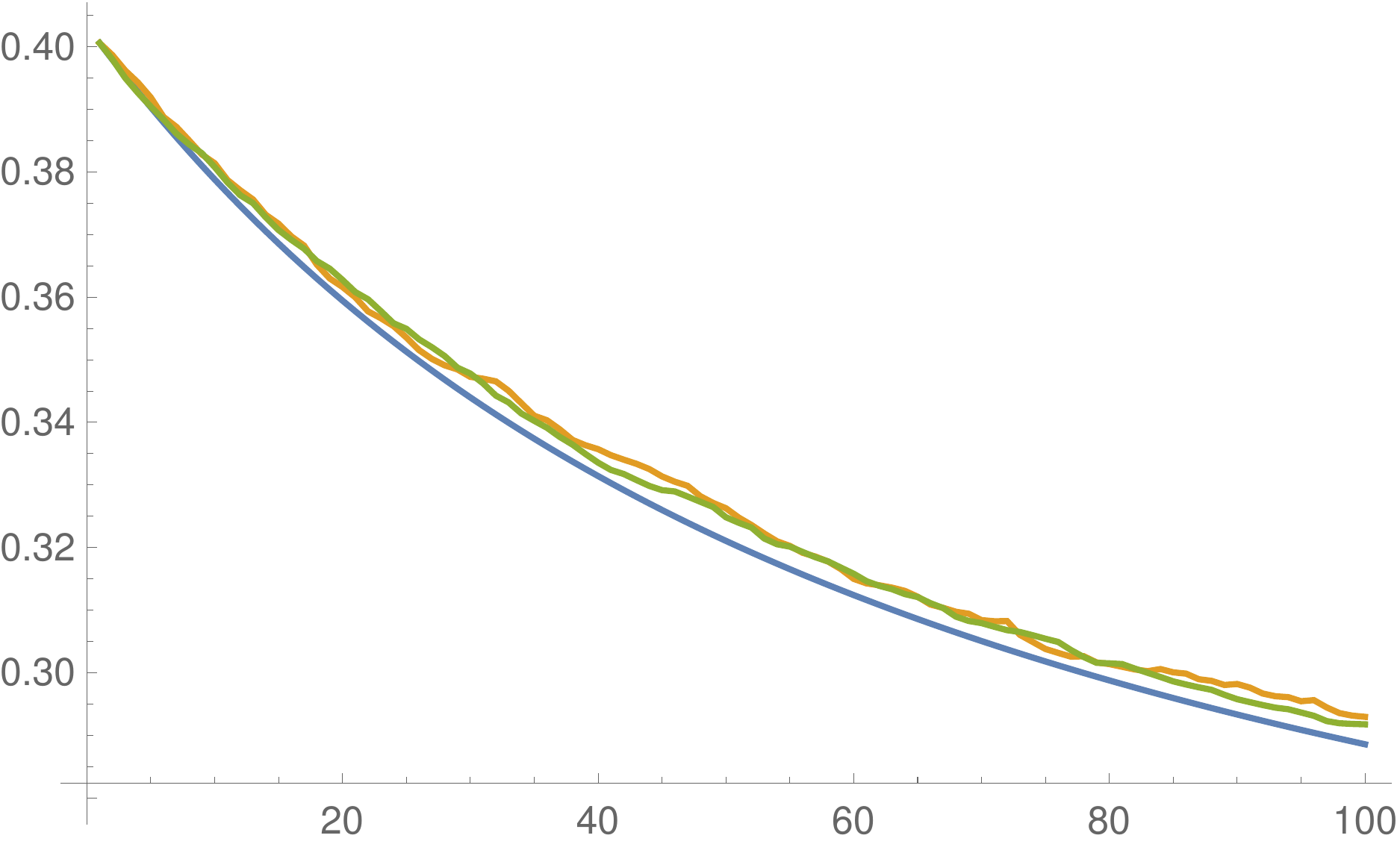}
    \end{minipage}
\qquad
   \begin{minipage}{0.2\textwidth}
        \centering
        \includegraphics[width=\linewidth, 
        height=0.1\textheight]{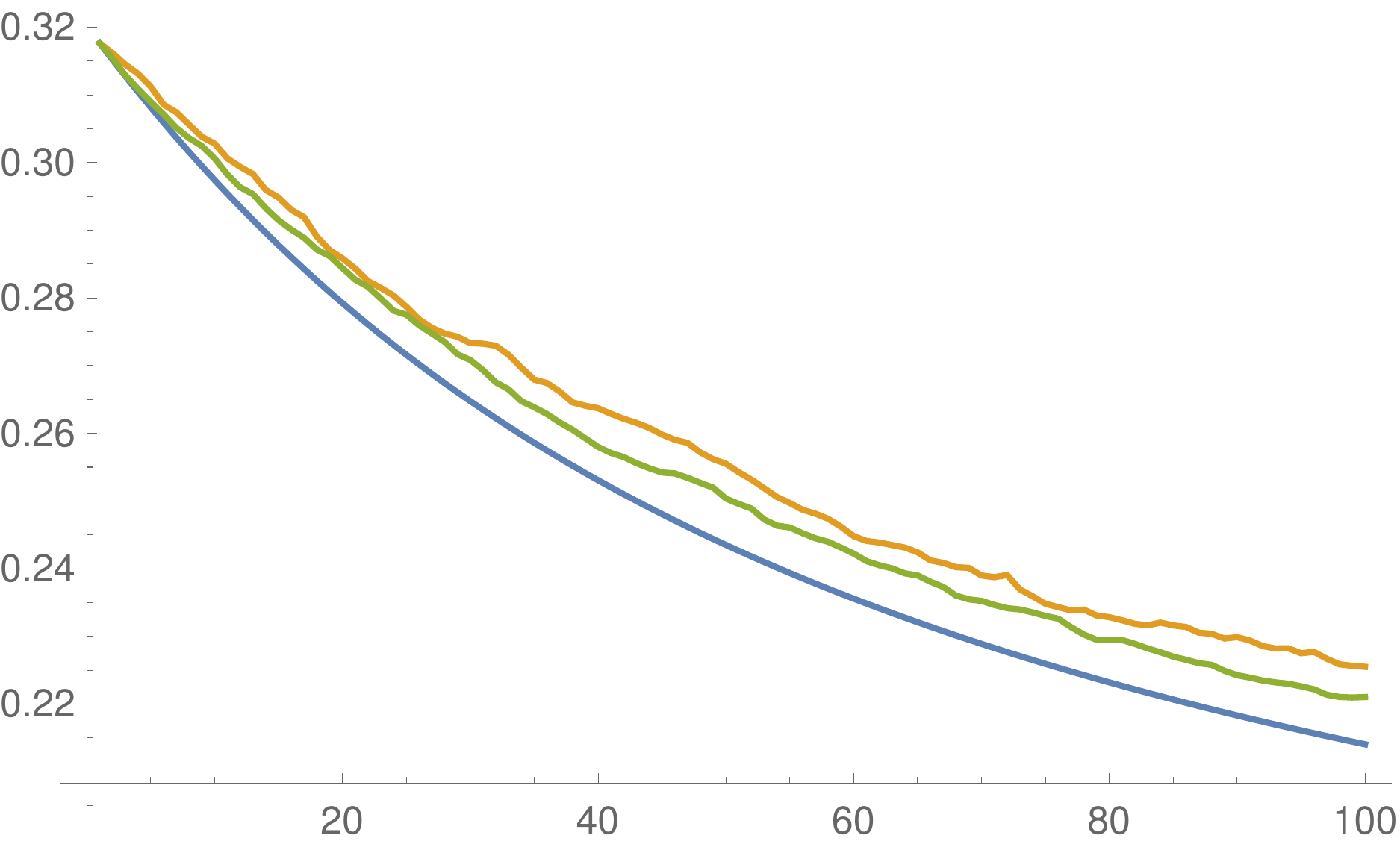}
    \end{minipage}
\caption{ The first four moments derived from $\overline{c}_{B^i}$ for $Z_i$ 
real valued (yellow) and complex valued (green).
The moments derived from the roots of $\tilde{p}_{B^i}(0, w-1, w)$ are in blue.}
\label{fig:moments2}
\end{figure}

\section{Conclusion}\label{sec:conclusion}

In many ways, it seems that the work in this paper suggests more interesting 
problems than it solves.
A number of these questions were discussed at the beginning 
of Section~\ref{sec:simulation}, however we would like to comment a bit more on 
the theme of the polynomial convolution matching the ``non-random'' part of 
some random matrix process. 
One intriguing part of the polynomial convolutions is that (unlike many other 
things in random matrix theory) they do not depend on $\beta$, a fact that has 
been discovered in various contexts a number of times \cite{B2, vadim}.
It is also quite common for $\beta$-ensembles on $\nn \times \nn$ matrices to 
converge as $\beta \to \infty$ to a uniform distribution on a set of $\nn$ 
points.
Any such distribution (finite, on $\nn$ points) is completely determined by the 
values of its expected elementary symmetric polynomials, and so one can hope to 
find a polynomial convolution which captures this behavior completely.
The typical way to prove this, however, would be to express the behavior of 
interest as a function of $\beta$ and then take the limit.
In situations where it is computationally infeasible to find such a function, 
are there other methods one could use to prove such a correspondence?
If so, Section~\ref{sec:simulation} suggests that there may be a 
measurable relationship between distributions as $\beta$ increases, and so such 
a result could lead to improved estimates for classical ensembles.

\subsection{Random matrix theory}

The most obvious question in this direction is whether a Gaussian point 
process that comes from  \textbf{4.} can be solved.  
That is, given matrices $A, G \in \MM_{\mm, \nn}$ where $A$ is fixed and the 
entries of $G$ are i.i.d. Gaussians, can we 
find the exact distribution of the (squares of the) generalized singular values 
of $A + \theta G$?
It is certainly understandable if any previous attempt seemed overly daunting 
--- as we have mentioned, instantaneous behavior of the Hermite, Laguerre, and 
Jacobi matrix processes depend only on the current point configuration and a 
small number of parameters, whereas it should 
be clear from Section~\ref{sec:simulation} that the point process derived from  
\textbf{4.} depends on a much larger number of parameters.
However the results of Section~\ref{sec:gsvdbm} suggest that these parameters 
can be captured by natural relationships between the two matrices, in which 
case an explicit formula could be possible.
We would think that such a formula would certainly be an interesting 
development in field.

%Another possible direction suggested by Section~\ref{sec:gsvdbm2} is to 
%determine whether the current matrix Jacobi process can be modeled in an 
%additive way.
%For example, is there a natural way to restrict the ``additive Jacobi 
%process'' 
%to the set of matrices satisfying $A^\trans A = I$?
%Would this provide any new perspectives or allow for different analyses to be 
%performed?

\subsection{Finite free probability}

Finding a polynomial convolution for a matrix operation (when it exists) tends 
to be a fairly reasonable task; the results of \cite{lg}, for example, reduce a 
number of possible convolution combinations to a straightforward calculation.
The opposite direction --- given an operation on polynomials, trying to find 
matrices (and matrix operations) that they correspond to --- seems much harder.
Even in the case where we know the operation, it tends to be hard to prove that 
there exist concrete (non-random) matrices that behave in this way.
The major tool in this respect is a result of Helton and Vinnikov concerning 
real stable polynomials \cite{hv}, however their result is quite quickly 
reaching the end of its utility (it is not true when the homogeneous 
polynomials in question have more than $3$ variables).
That said, it would not be surprising if polynomial convolutions that are based 
on matrix operations did not also maintain determinantal representation, and it 
would be interesting to find new ways to prove such statements.

The condition of the random matrix $X_{\mm, \nn}$ being symmetric turns out to 
be far more than we need --- as we have seen, the expected characteristic 
polynomial (\ref{eq:xyzcp}) is (at most) quadratic in the entries of the 
individual matrices.
The utility of having the symmetry condition is that it makes the resulting 
random matrix distribution $\SPM$--bi-invariant.
This suggests that a weaker condition than $\SPM$--bi-invariance might be 
sufficient for gaining the required amount of symmetry to be able to use 
Theorem~\ref{thm:lg}.
One natural candidate to replace the signed permutation matrices is the 
collection of matrices in the standard representation of $S_{n+1}$ (a set of 
size $(n+1)!$ instead of $2^n n!$). 
Furthermore, the validity of this replacement would follow easily from a 
conjecture in \cite{lg} that Theorem~\ref{thm:lg} holds in a slightly more 
general context (we refer the interested reader to \cite{lg} for more details).

\subsection{Acknowledgments}\label{sec:thanks}

The author would like to recognize the IPAM program in Quantitative Linear 
Algebra, without which this paper would likely not exist.
The results presented here all came, in one way or another, from discussions 
that were started at IPAM.
The author would also like to thank Benno Mirabelli, who helped to continue 
these discussions after the program had ended (and eventually led to the 
ideas in this paper).

\end{document}